\documentclass[a4paper,11pt,twoside]{article}
\usepackage[utf8]{inputenc}
\usepackage[T1]{fontenc}
\usepackage[english]{babel}
\usepackage{amssymb}
\usepackage[dvipsnames]{xcolor}
\usepackage{amsmath}
\usepackage{amsfonts}
\usepackage{amsthm}
\usepackage{fancyhdr}
\usepackage{esint}
\usepackage{authblk}
\usepackage{multicol}
\usepackage[cm]{fullpage}

\setlength{\parindent}{0pt}
\usepackage{hyperref}
\usepackage{enumerate}

\newcommand{\R}{\mathbb{R}}

\pagestyle{fancy}
\fancyhead{}
\fancyfoot[C]{Study of generalized Newtonian fluid flows}
\fancyfoot[LE,RO]{\thepage}

\setlength{\parindent}{0em}
\setlength{\parskip}{1.5ex}

\newtheorem{thm}{Theorem}[section]

\newtheorem{ppt}{Proposition}[section]
\newtheorem{defi}{Definition}[section]
\newtheorem{lem}{Lemma}[section]
\newtheorem{remark}{Remark}
\numberwithin{equation}{section}

\author[1]{Laurent CHUPIN, Nicolae CÎNDEA and Geoffrey LACOUR \thanks{Corresponding author: Geoffrey Lacour - \texttt{geoffrey.lacour@uca.fr}}}
\affil[1]{Université Clermont Auvergne, CNRS, LMBP, F-63000 Clermont-Ferrand, France}

\title{Variational inequality solutions and finite stopping time for a class of shear-thinning flows}
\date{}

\begin{document}

\maketitle

\textbf{\textsc{Mathematical Subject Classification} (2020)}: 35K55, 76D03, 35Q35, 76A05.
 
\textbf{\textsc{Keywords}}: non-Newtonian, generalized Newtonian, shear-thinning, variational inequalities.
 
\begin{abstract} 
 
The aim of this paper is to study the existence of a finite stopping time for solutions in the form of variational inequality to fluid flows following a power law (or Ostwald-DeWaele law) in dimension $N \in \{2,3\}$. We first establish the existence of solutions for generalized Newtonian flows, valid for viscous stress tensors associated with the usual laws such as Ostwald-DeWaele, Carreau-Yasuda, Herschel-Bulkley and Bingham, but also for cases where the viscosity coefficient satisfies a more atypical (logarithmic) form. To demonstrate the existence of such solutions, we proceed by applying a nonlinear Galerkin method with a double regularization on the viscosity coefficient. We then establish the existence of a finite stopping time for threshold fluids or shear-thinning power-law fluids, i.e. formally such that the viscous stress tensor is represented by a $p$-Laplacian for the symmetrized gradient for $p \in [1,2)$.
\end{abstract}

\section{Introduction}\label{sec:introduction}

The aim of this paper is to establish the existence of a finite stopping time for variational inequality solutions of a flow following an Ostwald-DeWaele, Bingham, or Herschel-Bulkley law in a diffusive setting. Such flows can be formally represented by the following system:

\begin{equation}\label{eq:0}
\begin{cases}
\partial_tu + (u \cdot \nabla) u + \nabla \pi - \Delta u - {\rm{div}}\left(F\left(\lvert D(u) \rvert\right) D(u)\right) = f& \text{in} \; (0,+\infty)\times \Omega\\
{\rm{div}}(u) = 0& \text{in}\; (0,+\infty) \times \Omega\\
u = 0& \text{on}\; [0,+\infty) \times \partial\Omega\\
u = u_0& \text{on}\; \{0\} \times \Omega,
\end{cases}
\end{equation}

where $\Omega$ is an open bounded subset of $\mathbb{R}^N$, for $N \in
\{2,3\}$ with a regular enough boundary $\partial \Omega$. Such
nonlinear systems describe the flow of incompressible generalized Newtonian
fluids and give rise to several relevant models. Several types of fluids are described by~\eqref{eq:0}. Firstly, if $F(t) = C$, the system~\eqref{eq:0} is the Navier-Stokes equations for a viscous incompressible fluid. Also, by choosing $F(t) =(1 + t^2)^{\frac{p-2}{2}}$, system~\eqref{eq:0} describes a Carreau flow. Another relevant example is obtained by choosing, for $p \in (1,2)$ by $F(t) = t^{p-2}$ for $t > 0$ which leads to an Ostwald-DeWaele (power-law) flow. In the particular case of Bingham arising for $p = 1$ which describes a plastic behavior, we get $F(0) \in [0,1]$
. In the latter case, the function is multivalued at the origin (note that a physical consequence of this phenomenon is the nonexistence of a reference viscosity for threshold fluids, see for example \cite{becker-80}). It is now established that this problem can be circumvented by considering the function outside the origin by a regularization process and by giving a meaning to its limit, in sense of sub-differential. This approach has been successfully carried out in the case of a two-dimensional Bingham flow (see, for instance, \cite{duvaut-lions}). 

In the present paper, we focus on the mathematical analysis of shear-thinning flows: a flow is said to be shear-thinning when its viscosity decreases as a function of the stresses applied to it, namely, in the flows we consider, that the function decreases as the shear rate increases. We mainly refer to \cite{coussot,bird,galdi} for the physical motivations of such models. Throughout the present article, we will consider simple fluid flows, that is we will make the assumption that the shear rate is the second invariant of the strain-rate tensor, and moreover it is a scalar quantity given by $\lvert D(u) \rvert$.

In the non-diffusive case (i.e. without Laplacian) the existence and the regularity of distributional solutions for $p > \frac{2N}{N+2}$, which corresponds to the limiting case of the compact Sobolev embedding $W^{1,p}(\Omega) \hookrightarrow L^2(\Omega)$ is known for the various boundary conditions, both in the stationary and evolutionary cases. We refer for example to \cite{ha,lbldmr,lbmr,fccg,ldmr,ldmrjw,hemr,jfjmms,jfmr,jmjnmr,jmjnkr,jw,jzzt} and the references therein for more details, as well as to the monograph \cite{ChlebickaGwiazdaSwierczewska-GwiazdaWroblewska-Kaminska-2021} for a complete and modern presentation of this type of problem. In this non-diffusive case, it is possible to show (see \cite{BurczakModenaSzekelyhidi-2021}) that the problem can be ill-posed in the sense of distributional solutions in the case $\frac{2N}{N+2} \geq p$. One can avoid such hypotheses on $p \geq 1$ by using dissipative solutions, whose existence has been proved in \cite{abbatiello-feireisl-20} in the three-dimensional setting. For the above reasons, we consider in the present paper variational inequality solutions in a diffusive setting, which we believe particularly interesting in view of numerical simulations perspective (see for example \cite[Chapter 4]{glowinski-lions-tremolieres} or \cite{saramito}) as for controllability (see for example \cite{friedman-86,ito-kunisch-09}).

Secondly, we focus on a remarkable property of shear-thinning power-law type fluids: the existence of a finite stopping time. Such a property has been established, for example, in the case of a two-dimensional Bingham flow in \cite{glowinski}, in the case of some electrorheological fluids in \cite{aafcpm}, as well as for the parabolic $p$-Laplacian operator (see \cite{dibenedetto-degenerate-parabolic}).

Roughly speaking, this property translates into the existence of a time $T_s > 0$ from which the fluid is at a standstill, i.e. such that the velocity field solution to the equation verifies $u(t) = 0$ for almost all $t \geq T_s$. Intuitively, the existence of such a stopping time for the fluid is specific to the shear-thinning character for the Ostwald-DeWaele fluid (as well as for a plastic fluid): the viscosity coefficient given by $F(t) = t^{p-2}$ is decreasing in such a case, which is formally characterized by $1 \leq p < 2$, and amounts to saying that the fluid's viscosity is all the greater the lower the stresses applied to it. It is therefore to be expected that, with no external force adding energy to the system, the time decay of the fluid's energy implies that its viscosity will increase until it stops. Note that in the diffusive case, which we consider here through the system~\eqref{eq:0}, we are able to establish the finite stopping time of the kinetic energy associated with the solution of \eqref{eq:0}, i.e. the stopping of its $L^2$-norm.

Having established the existence of weak solutions in the form of a parabolic variational inequality (see Theorem~\ref{thm:0} and Definition~\ref{defi:0}) for tensors $\tau$ of the form:

\begin{equation*}
\tau(D(u)) = F(| D(u) |)D(u),
\end{equation*}

similar to those established, for example, in \cite{duvaut-lions}, we will establish the existence of such a stopping time for the kinetic energy of solutions via a differential inequality method (see Theorem~\ref{thm:1}).

{\large \textbf{Assumptions over the viscosity coefficient $F$}}

Throughout this article, we will assume that the viscosity coefficient $F$ satisfies the following assumptions:
\begin{enumerate}
\item[(C1)] $F : (0,+\infty) \rightarrow (0,+\infty)$;
\item[(C2)]  $F \in  W^{1,\infty}_{\mathrm{loc}}\left((0,+\infty)\right)$;
\item[(C3)] $t \mapsto tF(t)$ is non-decreasing on $(0,+\infty)$;
\item[(C4)] there exist $p \in \left[1,2\right]$, $t_0 > 0$ and
  $K > 0$ such that for every $t \geq t_0$,  $F(t) \leq Kt^{p-2}$.
\end{enumerate}

 Some examples of functions verifying the above assumptions are given
 in Appendix~\ref{sec:some-exampl-funct}. We emphasize in particular that this takes into account many physical models, such as the Carreau, Bingham, Herschel-Bulkley, Cross, or power law flows. 
 
\begin{remark} Assumption (C3) is equivalent to the fact that for all $\varepsilon \geq 0$, the function $t \mapsto tF\left(\sqrt{\varepsilon + t^2}\right)$ is non-decreasing. Indeed, we can write:
$$\forall t \in (0,+\infty),\; tF\left(\sqrt{\varepsilon + t^2}\right) = \left(\frac{t}{\sqrt{\varepsilon + t^2}}\right)\sqrt{\varepsilon + t^2}F\left(\sqrt{\varepsilon + t^2}\right).$$
Hence, $t \mapsto tF\left(\sqrt{\varepsilon + t^2}\right)$ is the product of two non-negative and non-decreasing functions, so it is a non-decreasing function. The opposite implication being obvious by setting $\varepsilon = 0$. 
\end{remark}

\begin{remark}
Since the main objective of this article is not to study the existence of solutions, we have established Galerkin method by considering the assumption (C2) in order to make use of Picard-Lindelöf theory, but note that it is possible to weaken this hypothesis by making use of Cauchy-Peano or Carathéodory theory. Then, the assumption (C2) can be replace by $F \in C_{\mathrm{loc}}((0,+\infty))$ without changing the proof of Theorem~\ref{thm:0}.
\end{remark}

The existence of a finite stopping time for the kinetic energy associated to variational inequality weak solutions following from Theorem~\ref{thm:0} is proved in Section~\ref{sec:finite-stopping-time}, while considering a viscosity coefficient $F$ verifying (C1)-(C4) and such that it describes a power-type law, namely it satisfies for $1 \leq p < 2$ the additional assumption

  \begin{equation}
    \label{eq:F-power}
    F(t) \geq Ct^{p-2}.
  \end{equation}

Let us conclude with the observation that many fluids are described or approximated by such a law, and are used in a wide range of practical applications. Furthermore, many thixotropic flows (such as blood) also fall into this category, depending on the circumstances of the flow studied (see \cite{RobertsonSequeiraOwens-2009}).

\textbf{\underline{Notations:}} Throughout the paper, we denote in a generic way the constants by the letter $C$, and omit their dependence on the parameters in the notations while irrelevant for our study. The functional spaces are defined as follow. We denote by $\mathcal{C}^\infty_{0,\sigma}(\Omega)$ the space of divergence-free functions belonging to the space of smooth and compactly supported functions $\mathcal{C}^\infty_0(\Omega)$, and by $L^2_\sigma(\Omega)$ the closure of $\mathcal{C}^\infty_{0,\sigma}(\Omega)$ in $L^2(\Omega)$. Then, recalling that~$H_0^1(\Omega)$ is the closure of $\mathcal{C}_0^{\infty}(\Omega)$ into~$H^1(\Omega)$ (which is endowed with the norm $u \mapsto \lVert \nabla u \rVert_{L^2}$), we consider the Sobolev space $H_{0,\sigma}^1(\Omega)$ defined as 
\begin{equation*}
    H_{0,\sigma}^1(\Omega) := \left\{ w \in L^2_\sigma(\Omega)/\; w = \nabla v,\; v \in H_0^1(\Omega)\right\},
\end{equation*}
which is composed of functions whose trace and divergence are null. We denote~$H^{-1}_{\sigma}(\Omega)$ its dual and $\langle \cdot, \cdot\rangle$ is the duality product between
$H^{-1}_{\sigma}(\Omega)$ and~$H_{0,\sigma}^1(\Omega)$.  Finally, the space $\mathcal{C}_w(\R_+,L^2_\sigma(\Omega)$ is the functional space whose elements are continuous in the time variable and belonging to $L^2_\sigma(\Omega)$ endowed with its weak topology in the space variable. We should add when necessary the index ``$\mathrm{loc}$'' to underline that we consider local in time solutions.

\section{Weak characterization of solutions by a parabolic variational inequality}\label{sec:weak-char-parab}

In this section we introduce a weak formulation of system~\eqref{eq:0}
using a parabolic variational inequality (see Definition~\ref{defi:0}). First, we point out that in the system~\eqref{eq:0}, we consider a non-slip boundary condition on~$\partial\Omega$.It is thus natural to assume that the initial velocity field~$u_0$ is of
null trace on~$\partial\Omega$, namely~$u_0$ belongs to
$H_{0,\sigma}^1(\Omega)$. Following the ideas employed for showing the existence of solution to
Bingham equations in~\cite{duvaut-lions,stampacchia}, we
define a functional~$j$ making appear the viscous non-linear term in~\eqref{eq:0} in its derivative.

We fix for the moment $0 \le \varepsilon \le \delta $ and we define a function
$G_\varepsilon : (0, +\infty) \to (0,+\infty)$ and a functional
$j_\varepsilon : H_{0, \sigma}^1(\Omega) \to \mathbb{R}$ by
\begin{equation}
\label{eq:G-delta}
G_{\varepsilon}(t) = \int_{0}^{t}s F(\sqrt{\varepsilon
  + s^2})\; ds \qquad \text{for every } t \in (0, +\infty)
\end{equation}
and
\begin{equation}
  \label{eq:J-eps}
  j_\varepsilon (v) = \int_\Omega G_\varepsilon(|D(v)|)\, dx, \quad
  (v \in H_{0, \sigma}^1(\Omega)),
\end{equation}
respectively. We also denote~$j = j_0$ and~$G = G_0$.
One can check that
$G_{\varepsilon}$ is a convex functional for~$\varepsilon$ small enough. Indeed,
\[
  G_{\varepsilon}'(t) = t F(\sqrt{\varepsilon + t^2}), \qquad
  \text{for every } t \in (0, +\infty),
\]
and applying the hypothesis~(C3) the convexity of~$G$ follows
immediately. Moreover, we point out that the functional~$j_\varepsilon$
  defined by~\eqref{eq:J-eps} is convex and verifies
  \begin{equation}
    \label{eq:J-eps-'}
    \langle j_\varepsilon'(v),w \rangle_{-1,1} =
    \int_{\Omega}F\left(\sqrt{\varepsilon + \lvert D(v)
        \rvert^2}\right)\left(D(v):D(w)\right)\; dx \qquad (v, w \in
    H_{0, \sigma}^1(\Omega)).
  \end{equation}




\begin{remark} We point out that~$j'$ is well defined. Firstly, by our assumptions (C2) and (C3), we can deduce that for all $\beta \in \left(0,\frac{1}{2}\right)$, there exists~$\delta_0$ such that:
$$F(t) \leq t^{-(1 + \beta)} \quad \text{for every} \; t \in (0,\delta_0).$$

Indeed, assume that this last inequality does not hold, then for every~$\delta_0 > 0$, there exists~$t_0 \in (0,\delta_0)$ such that:
$$F(t_0) > t_0^{-(1 + \beta)}.$$

We can consider without loss of generality that $\delta_0 < {\rm min}\left(1,F(1)^{-\frac{1}{\beta}}\right)$, which implies, using our assumption~(C3):
 
$$\delta_0^{-\beta} < t_0^{-\beta} < t_0F(t_0) \leq F(1).$$

This contradiction shows the result. We recall Korn's~$L^2$ equality for divergence free vector fields:
$$\int_{\Omega} \lvert D(\varphi) \rvert^2\; dx = \frac{1}{2}\lVert \varphi \rVert^2_{H_0^1}, \quad (\varphi \in H_{0,\sigma}^1(\Omega)).$$
 Using these last results and applying Cauchy Schwarz's and Hölder's inequalities, we get:
\begin{align*}
\lvert \langle j'(u), \varphi \rangle_{-1,1} \rvert &= \left\lvert \int_{\Omega}F(\lvert D(u) \rvert)D(u):D(\varphi)\; dx\right\rvert\\
&\leq \frac{1}{\sqrt{2}}\left(\int_{\Omega} F(\lvert D(u) \rvert)^2\lvert D(u) \rvert^2\; dx\right)^{\frac{1}{2}}\lVert \varphi \rVert_{H_0^1}\\
&=\frac{1}{\sqrt{2}}\left(\int_{\left\{\lvert D(u) \rvert \leq \delta_0\right\}} F(\lvert D(u) \rvert)^2\lvert D(u) \rvert^2\; dx + \int_{\left\{\lvert D(u) \rvert > \delta_0\right\}} F(\lvert D(u) \rvert)^2\lvert D(u) \rvert^2\; dx\right)^{\frac{1}{2}}\lVert \varphi \rVert_{H_0^1}\\
&\leq \frac{1}{\sqrt{2}}\left(\int_{\left\{\lvert D(u) \rvert \leq \delta_0\right\}} \lvert D(u) \rvert^{-2\beta}\; dx + \int_{\left\{\lvert D(u) \rvert > \delta_0\right\}} F(\lvert D(u) \rvert)^2\lvert D(u) \rvert^2\; dx\right)^{\frac{1}{2}}\lVert \varphi \rVert_{H_0^1}\\
& = \frac{1}{\sqrt{2}}\left(\frac{1}{1-2\beta}\int_{\left\{\lvert D(u) \rvert \leq \delta_0\right\}}\int_0^{\lvert D(u) \rvert}s^{1-2\beta}\; ds\; dx + \int_{\left\{\lvert D(u) \rvert > \delta_0\right\}} F(\lvert D(u) \rvert)^2\lvert D(u) \rvert^2\; dx\right)^{\frac{1}{2}}\lVert \varphi \rVert_{H_0^1}.
\end{align*}

This implies that~$j'$ is well-defined.
\end{remark}

We now establish the definition of solutions in the form of variational inequality, which we will consider in the rest of the article.

\begin{defi}[Weak solution of \eqref{eq:0}]\label{defi:0}
  We say that a function
  $u \in L^{2}_{\mathrm{loc}}\left(\R_+,H_{0,\sigma}^1(\Omega)\right) \cap \mathcal{C}_{w,\mathrm{loc}}(\R_+,L^2_{\sigma}(\Omega))$ such that
  $\partial_tu \in L^{\frac{4}{N}}_{\mathrm{loc}}\left(\R_+,H^{-1}_{\sigma}(\Omega)\right)$
  is a weak solution of \eqref{eq:0} if and only if~$u$ verifies $u_{|t=0} = u_0 \in H_{0, \sigma}^1(\Omega)$, and for every fixed~$T > 0$ and all
  $\varphi \in \mathcal{C}^{\infty}((0,T) \times \Omega)$ we have:
  
\begin{align}
  &\int_0^T
    \left\langle \partial_tu(t),\varphi(t)\right
    \rangle\; dt + \frac{1}{2}\left(\lVert u_0 \rVert_{L^2(\Omega)}^2 - \lVert u(T) \rVert_{L^2(\Omega)}^2\right) 
    + \int_0^T\int_{\Omega}
    D(u(t)):D(\varphi(t)- u(t))\; dx \nonumber \\
  &
    -\int_0^T\int_{\Omega}\left(u(t)\cdot \nabla u(t)\right) \cdot
    \varphi(t)\; dx\; dt + \int_0^T\int_{\Omega} G\left(\lvert D(\varphi(t))
    \rvert\right) - G\left(\lvert D(u(t)) \rvert\right)\; dx
     \; dt \nonumber \\
  & \geq \int_0^T\left\langle f(t),\varphi(t) - u(t)\right\rangle\; dt. \label{eq:weak-sol-ineq}
\end{align} 
\end{defi}

 Let us quickly motivate this definition with some formal computations. First, we point out that since~$u$ belongs to $\mathcal{C}_{w,\mathrm{loc}}(\R_+,L^2_{\sigma}(\Omega))$, Definition~\ref{defi:0} makes sense. Then, if we consider for some fixed~$T > 0$ that the
 Lebesgue measure of the set
 \[
   \left\{(t,x) \in (0,T)\times \Omega \ \mid \ \lvert D(u)(t,x) \rvert \leq \delta
   \right\}
 \]
 is equal to zero for a small~$\delta > 0$, we have that:
$$\int_0^T \langle j'(u), \varphi \rangle\, dt = \int_0^T \int_{\Omega} F\left(\lvert D(u) \rvert\right)\left(D(u) : D(\varphi)\right)\; dx\, dt.$$

 Now, if we replace~$\varphi$ by~$u + s\varphi$, with~$s>0$, in the variational inequality \eqref{eq:weak-sol-ineq}, we obtain after dividing by~$s$:

\begin{align*}
&\int_0^T \int_{\Omega} D(u):D(\varphi)\;dx\, dt + \int_0^T\int_{\Omega}\frac{G\left(\lvert D(u + s\varphi) \rvert\right) - G\left(\lvert D(u) \rvert\right)}{s}\;dx\, dt\\
& \geq \int_0^T \int_{\Omega} \langle f- \partial_tu, \varphi \rangle\, dt - \int_0^T\int_{\Omega}\left(u \cdot\nabla u\right) \cdot \varphi\; dx\, dt.
\end{align*}

 Since~$j$ admits a Fréchet-derivative, it also admits a Gâteaux-derivative and both are the same. Hence, taking the limit as~$s \rightarrow 0$:
\begin{align*}
  &\int_0^T\int_{\Omega}D(u):D(\varphi)\;dx\, dt + \int_0^T\int_{\Omega}F\left(\lvert D(u) \rvert\right)\left(D(u):D(\varphi)\right)\;dx\, dt\\
  & \geq \int_0^T\int_{\Omega} \langle f- \partial_tu, \varphi \rangle\, dt
    -
    \int_0^T\int_{\Omega}\left(u \cdot \nabla
    u\right)
    \cdot
    \varphi\;
    dx\, dt.
\end{align*}

Repeating once again the previous reasoning but writing~$u -s\varphi$ instead of~$u+ s\varphi$, we get the following equality:
\begin{align*}
&\int_0^T\int_{\Omega}D(u):D(\varphi)\;dx\, dt + \int_0^T\int_{\Omega}F\left(\lvert D(u) \rvert\right)\left(D(u):D(\varphi)\right)\;dx\, dt\\
& = \int_0^T\int_{\Omega} \langle f- \partial_tu, \varphi \rangle\, dt - \int_0^T\int_{\Omega}\left(u\cdot\nabla u\right)\cdot\varphi\; dx\, dt.
\end{align*}

 Therefore, assuming that~$u$ is regular enough, we obtain
\begin{align*}
&-\frac{1}{2}\int_0^T\int_{\Omega}\Delta u \cdot \varphi\;dx\,dt - \int_0^T\int_{\Omega}\text{div}\left(F\left(\lvert D(u) \rvert\right)D(u)\right)\varphi\;dx\, dt\\
& = \int_0^T\int_{\Omega}\left(f- \partial_tu - u \cdot\nabla u\right)\cdot\varphi\; dx\,dt.
\end{align*}

Furthermore De Rham's theorem for a domain with Lipschitz boundary states that there exists a pressure term~$p$ such that~$f = \nabla p$ into some well chosen space (see \cite[section 2]{ldmrjw} for details). Considering such a function and also the two previous observations, we can write:
$$\int_0^T\int_{\Omega}\left(\partial_tu + u.\nabla u - \frac{1}{2}\Delta u + \nabla p -\text{div}\left(F\left(\lvert D(u) \rvert\right)D(u)\right)
    - f\right)\varphi\; dx\,dt = 0,
    \qquad
    \left(\varphi \in \mathcal{C}^{\infty}((0,T) \times \Omega)\right),$$
    
which is almost everywhere equivalent to the equation \eqref{eq:0} up to the multiplicative dynamic viscosity constant~$\frac{1}{2}$. We have omitted this constant in Definition~\ref{defi:0} for convenience, and note that it is enough to add the constant~$2$ in front of the term $\int_0^T\int_{\Omega} D(u) :D(u-\varphi)\;dx\,dt$ in order to find exactly \eqref{eq:0}.\\ 

Finding a solution to the parabolic variational inequality thus amounts to giving meaning to the integral of the nonlinear viscosity coefficient term inherent in the problem, which can be a singular integral in the case of a Bingham fluid.

\section{Main results}

As announced in the introduction, the main result of this paper is the existence of a finite stopping time for the kinetic energy of solutions of the system~\eqref{eq:0}. To this end, we present a proof of the existence of solutions in the form of a variational inequality for this system. The study
of the existence of such solutions has initially been developed in \cite{stampacchia}. Then, this method was
successfully applied for some nonlinear parabolic problems, as the two
dimensional Bingham equations in \cite{duvaut-lions}, or some power
law systems in \cite{jmjnmrmr}. Following a similar approach, we get
the following existence theorem.

\begin{thm}\label{thm:0} Assume that the function~$F$ satisfies the
  hypotheses~(C1)-(C4) and that $\Omega \subset \mathbb{R}^N$,
 ~$N \in \{2,3\}$, is a bounded domain with a Lipschitz boundary, and
  consider an initial datum $u_0 \in H_{0,\sigma}^1(\Omega)$ and a force term
  $f \in L^2((0,T),H^{-1}_\sigma(\Omega))$. Then, there exists a weak
  solution~$u$ of \eqref{eq:0} having the following regularity
  \[
    u \in \mathcal{C}_{w,\mathrm{loc}}\left(\mathbb{R}_+,L^2_{\sigma}(\Omega)\right) \cap
    L^2_{\mathrm{loc}}\left(\mathbb{R}_+, H_{0, \sigma}^1(\Omega)\right) \quad \text{and}
    \quad \partial_tu \in L^{\frac{4}{N}}_{\mathrm{loc}}(\mathbb{R}_+,H^{-1}_\sigma(\Omega)).
  \]
\end{thm}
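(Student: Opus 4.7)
The plan is to combine a double approximation procedure with the monotonicity provided by the convexity of $j_\varepsilon$. First, I would fix $\varepsilon > 0$ and consider the regularized problem in which the singular term $F(|D(u)|)D(u)$ is replaced by $F(\sqrt{\varepsilon + |D(u)|^2})D(u)$, i.e.\ the term whose primitive is $j_\varepsilon$. By Lemma~\ref{lem:0}, $j_\varepsilon$ is a convex and differentiable functional on $H_{0,\sigma}^1(\Omega)$, so the regularized PDE is a genuine equation (not an inequality) with a locally Lipschitz nonlinearity.

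Next I would discretize by a Galerkin scheme: let $(w_k)$ be an orthonormal basis of $L^2_\sigma(\Omega)$ consisting of eigenfunctions of the Stokes operator, $V_n = \mathrm{span}(w_1,\dots,w_n)$, and seek $u_n^\varepsilon \in \mathcal{C}^1([0,T];V_n)$ solving the projected equation with initial datum $P_n u_0$. Existence on a maximal interval follows from Cauchy--Lipschitz, and global existence from the energy estimate: testing against $u_n^\varepsilon$ and using that $\langle j_\varepsilon'(u_n^\varepsilon),u_n^\varepsilon\rangle_{-1,1} \ge 0$ (because $tF(\cdot)\ge 0$), the convective term vanishes, and Korn's identity yields the bound
\begin{equation*}
\|u_n^\varepsilon\|_{L^\infty(0,T;L^2)}^2 + \|u_n^\varepsilon\|_{L^2(0,T;H_0^1)}^2 \le C(\|u_0\|_{L^2}^2 + \|f\|_{L^2(H^{-1})}^2).
\end{equation*}
To control the time derivative I would estimate each term in $H^{-1}_\sigma$: the linear dissipation is in $L^2(H^{-1})$, the source $f$ as well, and for the nonlinear viscous term the growth bound (C4) together with the already known $L^2(H^1)$ estimate shows it is in $L^2(H^{-1})$. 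The convective term $u_n^\varepsilon \cdot \nabla u_n^\varepsilon$ is handled by the standard Sobolev--interpolation argument ($L^4_tL^4_x$ in dimension $2$, $L^{8/3}_tL^4_x$ in dimension $3$), yielding $(u_n^\varepsilon)' \in L^{4/N}(0,T;H^{-1}_\sigma)$ uniformly.

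To pass to the limit $n\to\infty$ I would extract weakly convergent subsequences $u_n^\varepsilon \rightharpoonup u^\varepsilon$ in $L^2(H_0^1)$, $u_n^\varepsilon \rightharpoonup^\ast u^\varepsilon$ in $L^\infty(L^2)$ and $(u_n^\varepsilon)' \rightharpoonup (u^\varepsilon)'$ in $L^{4/N}(H^{-1})$; Aubin--Lions then gives strong convergence in $L^2(L^2)$ (actually in $L^2(L^q)$ for $q<6$), enough to pass to the limit in the convective term. Rather than passing to the limit in the nonlinear viscous term directly, I would test the Galerkin equation against $\varphi_n - u_n^\varepsilon$ for $\varphi_n \in V_n$ converging to an arbitrary $\varphi$ and use the convexity inequality $\langle j_\varepsilon'(u_n^\varepsilon),\varphi_n - u_n^\varepsilon\rangle \le j_\varepsilon(\varphi_n) - j_\varepsilon(u_n^\varepsilon)$. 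By weak lower semicontinuity of $j_\varepsilon$ and the strong convergence of $u_n^\varepsilon$, I obtain, for $u^\varepsilon$, a variational inequality of the type~\eqref{eq:weak-sol-ineq} but with $G$ replaced by $G_\varepsilon$.

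Finally, to send $\varepsilon \to 0^+$, I would use the same uniform a priori estimates (they depend only on (C3)--(C4), not on $\varepsilon$) to extract $u^\varepsilon \to u$ in the same topologies as above, again with strong $L^2(L^2)$ convergence by Aubin--Lions. The key point is that $G_\varepsilon(t) \to G(t)$ monotonically as $\varepsilon \downarrow 0$ (since $sF(\sqrt{\varepsilon+s^2})$ is monotone in $\varepsilon$ by (C3) with the substitution in the Remark), and $G_\varepsilon(t)\le G(t) + C\varepsilon^{1/2}(t+1)$; combined with the weak lower semicontinuity of $j$ applied to $D(u^\varepsilon) \rightharpoonup D(u)$, this lets me pass to the limit in the nonlinear functional. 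I expect the main obstacle to be precisely this last step: verifying that $j_\varepsilon(u^\varepsilon) \to j(u)$ (or at least that the correct one-sided inequality survives) when $F$ is allowed to blow up at $0$ under (C4) with $\alpha=1$ (the Bingham case). The computation carried out in the Remark, which shows that the singular part of $|F(|D(u)|)D(u)|^2$ is integrable because $F(t)\lesssim t^{-(1+\beta)}$ with $\beta < 1/2$, will be essential for controlling the nonlinear term on the set where $|D(u)|$ is small and for concluding that the limit $u$ indeed satisfies Definition~\ref{defi:0}.
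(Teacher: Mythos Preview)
Your strategy is the same as the paper's---regularize the singular term by $\varepsilon>0$, discretize by Galerkin, derive uniform energy and time-derivative bounds, and pass to the limit using the convexity of $j_\varepsilon$ to replace the derivative $j_\varepsilon'$ by the difference $j_\varepsilon(\varphi)-j_\varepsilon(u)$---but you take the two limits in the opposite order: the paper sends $\varepsilon\to 0$ first at fixed Galerkin level $m$ (Lemma~\ref{lem:weak-convergence-eps}), where everything lives in a finite-dimensional space, and only afterwards lets $m\to\infty$; you instead pass $n\to\infty$ at fixed $\varepsilon$ (where the nonlinearity is smooth, so this step is essentially the Navier--Stokes argument) and then $\varepsilon\to 0$. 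Both orderings work because the a~priori bounds of Proposition~\ref{ppt:5} are uniform in the pair $(m,\varepsilon)$.

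One point needs correction. You claim that $G_\varepsilon(t)\to G(t)$ \emph{monotonically} in $\varepsilon$ ``by (C3) with the substitution in the Remark''; but (C3) and the Remark give monotonicity of $t\mapsto tF(\sqrt{\varepsilon+t^2})$ in $t$, not in $\varepsilon$, and the sign of $\partial_\varepsilon G_\varepsilon$ is that of $F'$, which the hypotheses do not control. What you actually need for the final $\liminf$ is the one-sided estimate $j(\varphi)\le j_\varepsilon(\varphi)+C(\varepsilon,\varphi)$ with $C(\varepsilon,\varphi)\to 0$; the paper obtains this directly from (C3) in Lemma~\ref{lem:liminf-eps} by the change of variable $s\mapsto\sqrt{\varepsilon+s^2}$, and that argument transplants to your ordering without change. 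With this fix, combined with the weak lower semicontinuity of the convex functional $j$ on $L^2((0,T);H^1_{0,\sigma})$, your passage $\varepsilon\to 0$ goes through.
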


This result thus ensures the existence of suitable solutions in the two-dimensional and three-dimensional cases. It follows from classical arguments that the solutions are Hölder continuous in time, for a well-chosen Hölder coefficient. 

The nonlinear term in the Bingham equations allows us to obtain the rest of the fluid in finite time in the two-dimensional case. This has been demonstrated in \cite{glowinski}, using the following approach: it is assumed that the force term will compensate the initial kinetic energy of the
 fluid, which amounts to establishing a relation between the norm
~$\lVert u_0 \lVert_{L^2}$ and an integral of
~$\lVert f(t) \lVert_{L^2}$. This argument is based on the use of the
 following two-dimensional Nirenberg-Strauss inequality:

 \[
   \exists \gamma > 0,\; \forall u \in H_0^1(\Omega),\; \lVert u \rVert_{L^2}
   \leq \gamma\int_{\Omega}\lvert D(u) \rvert \; dx.
\]

We note that such an inequality cannot be true in dimension greater than two, because it would contradict the optimality of Sobolev embedding. We therefore propose to slightly adapt this approach to show the existence of a stopping finite time in both the two and the three-dimensional cases. Firstly, let us formalize the definition.

\begin{defi}[Finite stopping time]\label{FST} Let~$u$ be a weak solution in the sense of Definition~\ref{defi:0} of the system~\eqref{eq:0}. We say that~$T_0 \in \mathbb{R}_+$ is a finite stopping time for~$u$ if:
$$\lVert u(T_0) \rVert_{L^2(\Omega)} = 0.$$
\end{defi}

In order to prove the existence of a finite stopping time for the solution~$u$ provided by Theorem~\ref{thm:0}, we do not make any assumption on
the initial velocity field, but we assume that after a certain time
the fluid is no longer subjected to any external force. More exactly
we make some more assumption on~$F$ as stated by the following theorem.

\begin{thm}[Existence of a finite stopping time]
  \label{thm:1}
  Assume that the hypotheses of Theorem~\ref{thm:0} are verified and
  that~$p \in \left[1,2\right)$. Moreover, we assume
  that there exists two positive constants~$\kappa$ and~$T_1$ such that
  \begin{equation}
    \label{eq:F-alpha}
    F(t) \ge \kappa t^{p-2} \text{ for every } t \in (0, +\infty)
    \qquad \text{and} \qquad f = 0 \quad \text{almost everywhere on } (T_1, +\infty).
  \end{equation}
  Then, there exists a finite stopping time 
 ~$T_0 \in \mathbb{R}_+$ for~$u$ in the sense of Definition~\ref{FST}. Moreover, there exists a constant~$C > 0$ such that
  \begin{equation}
      \label{eq:bound-T0}
      T_0 \le T_1 + \frac{C}{1 - s(p)}
      \left(\|u_0\|_{L^2(\Omega)}+ \|f\|_{L^2((0, T_1), H^{-1}_{\sigma}(\Omega))}\right)^{1 - s(p)},
  \end{equation}
  with
\begin{equation}
    \label{eq:s(p)}
    s(p) = \frac{5p - 4}{4 +p}.
\end{equation}
\end{thm}

Thus, this result suggests that if no energy is added to the system, then the kinetic energy associated with an Ostwald-DeWaele or Bingham-type flow should become null in a finite time.
\begin{remark}
Estimate~\eqref{eq:bound-T0} in Theorem~\ref{thm:1} degenerates when~$p \to 2$. More exactly, the upper bound on the stopping time goes to~$+\infty$ when~$p \to 2$. This is related to the loss of the finite stopping time property of the solution in the limit case~$p = 2$.
\end{remark}

\section{Proof of Theorem~\ref{thm:0}}

In this section, we establish the proof of Theorem~\ref{thm:0} in the two-dimensional and three-dimensional settings. In order to prove this result, we begin by establishing an energy estimate for solutions obtained by the Galerkin method in order to obtain uniform bounds with respect to the parameters. We note here that we will have two parameters: a first parameter due to Galerkin's approximation, and a second one due to the regularization proper to the viscosity coefficient~$F$.

First, we briefly establish the Galerkin solutions for the regularized system, with the regularization usually used in numerical methods. Next, we carry out energy estimates to derive, in a third step, weak convergence properties. Finally, we demonstrate the result by making use of properties specific to variational inequalities. 

{\bf First step: Galerkin scheme}

We apply here the usual Galerkin method using the Stokes operator in homogeneous Dirichlet setting, and we use its eigenfunctions $(w_i)_{i \in \mathbb{N}}$ as an orthogonal basis of~$H_{0,\sigma}^1(\Omega)$ and orthonormal basis of~$L^2_{\sigma}(\Omega)$ (see \cite{evans} for details about this property, and \cite[Section 2.3]{rrs} for details concerning the Stokes operator). 

For every positive integer~$m$, we denote by~$P_m $ the projection of~$L^2_{\sigma}(\Omega)$ onto $\text{Span}\left((w_i)_{1 \leq i \leq m}\right)$. We would like to formally define our Galerkin system as follows.
\begin{equation}\label{eq:gal}\left\{
    \begin{array}{ll}
      \partial_tu_{m} + P_m(u_{m}\cdot\nabla u_{m}) +\nabla P_m(\pi) & - \Delta u_{m}
      - P_m\left({\rm{div}}\left(F\left(\lvert D(u_{m}) \rvert\right)
      D(u_{m})\right)\right) = P_mf \\
{\rm{div}}(u_{m}) = 0& \text{ on } \R_+ \times \Omega\\
u_{m} = 0& \text{ on } \R_+ \times \partial\Omega\\
u_{m} = P_m(u_0) & \text{ on } \{0\} \times \Omega.
    \end{array}
  \right.
\end{equation}

In order to avoid the issue posed by the nonlinear term in domains for which the fluid is not deformed we consider the following regularized Galerkin system:

\begin{equation}\label{eq:gal2}
\begin{cases}
\partial_t u_{m,\varepsilon} - P_m\Big({\rm{div}}\Big(F\Big(\sqrt{\varepsilon + \lvert D(u_{m,\varepsilon}) \rvert^2}\Big) D(u_{m,\varepsilon})\Big)\Big) \\
 \phantom{\partial_t u_{m,\varepsilon} - } + P_m(u_{m,\varepsilon} \cdot
  \nabla u_{m,\varepsilon}) + \nabla P_m(\pi) - \Delta u_{m,\varepsilon} = P_mf
& \mathrm{ in }\; \R_+ \times \Omega\\
{\rm{div}}(u_{m,\varepsilon}) = 0 & \mathrm{ in }\; \R_+ \times \Omega\\
u_{m,\varepsilon} = 0 & \mathrm{ on }\; \R_+ \times \partial\Omega\\
{u_{m,\varepsilon}}_{|t=0} = P_m(u_0)& \mathrm{ in }\;\Omega,\\
\end{cases}
\end{equation}

with~$0 < \varepsilon < 1$. Applying a Galerkin method, we can see that, writing $u_{m,\varepsilon}(t) = \sum_{i=1}^md_m^i(t)w_i$, we obtain the ordinary differential system for all~$ 1 \leq i \leq m$:

\begin{multline}\label{eq:galcoeff}
{d^i_m}'(t) =  \langle f,w_i \rangle - \int_{\Omega}\frac{1}{2}\lVert w_i \rVert_{H_0^1}^2d^i_m(t)\; dx - \int_{\Omega}D(u_0):D(w_i)\; dx \\
 - \int_{\Omega}\frac{1}{2}\lVert w_i \rVert^2_{H_0^1}F\left(\sqrt{\varepsilon + \sum_{j=1}^m\frac{1}{2}\lVert w_j \rVert^2_{H_0^1}(d^j_m(t))^2 + 2(D(w_j):D(u_0))d^j_m(t) + \frac{1}{2}\lVert u_0 \rVert^2_{H_0^1}}\right)d^i_m(t)\;dx \\
- \int_{\Omega}F\left(\sqrt{\varepsilon + \sum_{j=1}^m\frac{1}{2}\lVert w_j \rVert^2_{H_0^1}(d^j_m(t))^2 + 2(D(w_j):D(u_0))d^j_m(t) + \frac{1}{2}\lVert u_0 \rVert^2_{H_0^1}}\right)(D(u_0):D(w_i))\;dx \\
- \sum_{j=1}^m\int_{\Omega}w_j \cdot \nabla w_i d^i_m(t)d^j_m(t)\; dx,
\end{multline}
completed with initial condition $d^i_m(0) = (u_0,w_i)_{H_0^1}$. This system is described by a locally Lipschitz continuous function with respect to~$d_m$. Indeed, applying the hypothesis~(C2), the function $\psi : \mathbb{R}^m \to \mathbb{R}$ defined by
\[
  \psi(x) = F\left(\sqrt{\varepsilon^2 + \sum_{j=1}^m\frac{1}{2}\lVert
      w_j \rVert^2_{H_0^1}x_j^2 + 2(D(w_j):D(u_0))x_j +
      \frac{1}{2}\lVert u_0 \rVert^2_{H_0^1}}\right) \qquad \forall x
  \in \mathbb{R}^m
\]
is locally Lipschitz. The Picard-Lindelöf theorem shows the existence of a solution for system~\eqref{eq:gal2} defined on~$\R_+$.

{\bf Second step : Energy estimates and weak convergence}

We recall that the solution~$u_{m,\varepsilon}$ of~\eqref{eq:gal2} belongs to
$\text{Span}\left((w_i)_{1 \leq i \leq m}\right)$, for
$(w_i)_{i \in \mathbb{N}}$ the basis of~$H_{0,\sigma}^1(\Omega)$ which are the
eigenfunctions of the Stokes operator in the homogeneous Dirichlet setting.

In order to clarify our presentation, we specify that we consider the following notion of solution.

\begin{defi}[Solution of \eqref{eq:gal2}]
  \label{defi:1} We say that
  $u_{m,\varepsilon} \in L^2((0, T), H_{0, \sigma}^1(\Omega))$,
  $\partial_tu_{m,\varepsilon} \in L^2((0,T), H^{-1}(\Omega)) $ is a weak
  solution of \eqref{eq:gal2} if for every
  $\varphi \in \mathcal{C}^{\infty}((0,T) \times \Omega)$ and for
  a.e.~$t \in (0,T)$ it satisfies
\begin{equation}\label{eq:wf}
\langle \partial_tu_{m,\varepsilon}, \varphi \rangle + \int_{\Omega} D(u_{m,\varepsilon}):D(\varphi)\; dx + \langle j_{\varepsilon}'(u_{m,\varepsilon}),\varphi \rangle - \int_{\Omega}(u_{m,\varepsilon}\cdot\nabla u_{m,\varepsilon})\cdot\varphi\; dx  = \langle f,\varphi \rangle.
\end{equation}
\end{defi}

We point out that this definition makes sense since we are studying smooth finite dimensional Galerkin solutions. Then, in order to obtain weak limits into the Galerkin formulation, we establish usual energy estimates, proved in Appendix~\ref{app:lemmas}. 

\begin{ppt}\label{ppt:5} Assume that~$u_{m,\varepsilon}$ is a
  solution of \eqref{eq:gal2} in the sense of Definition~\ref{defi:1}.
  Then, there exists a positive constant~$C$ depending on~$p$, $\Omega$, $N$, $\lVert u_0 \rVert_{L^2(\Omega)}$ and~$\lVert f \rVert_{L^2_{\mathrm{loc}}\left(\R_+,H^{-1}(\Omega)\right)}$ such that the
  following estimates hold:
\begin{enumerate}
\item $\lVert u_{m,\varepsilon} \rVert_{L^{\infty}_{\mathrm{loc}}\left(\R_+,L^2_{\sigma}\right)}^2 + \frac{1}{2}\lVert u_{m,\varepsilon} \rVert_{L^2_{\mathrm{loc}}\left(\R_+,H_{0,\sigma}^1\right)}^2 \leq C\left(\lVert f \rVert^2_{L^2_{\mathrm{loc}}\left(\R_+,H^{-1}\right)} + \lVert u_0 \rVert_{L^2}^2\right)$;
\item $\lVert j_{\varepsilon}'(u_{m,\varepsilon})\rVert_{L_{\mathrm{loc}}^{\frac{4}{N}}\left(\R_+,H^{-1}\right)} \leq C\left(1 + \lVert f \rVert_{L^2_{\mathrm{loc}}(\R_+,H^{-1})} + \lVert u_0 \rVert_{L^2}\right)^{p-1}$;

\item $\lVert \partial_tu_{m,\varepsilon}\rVert_{L^{\frac{4}{N}}_{\mathrm{loc}}\left(\R_+,H^{-1}\right)}
  \leq C\left(\lVert f \rVert^2_{L^2_{\mathrm{loc}}\left(\R_+,H^{-1}\right)} + \lVert u_0 \rVert_{L^2}^2\right) + C\left(\lVert f \rVert^2_{L^2_{\mathrm{loc}}\left(\R_+,H^{-1}\right)} + \lVert u_0 \rVert_{L^2}^2\right)^2$
  $$ + C\left(1 + \lVert f \rVert_{L^2_{\mathrm{loc}}(\R_+,H^{-1})} + \lVert u_0 \rVert_{L^2}\right)^{p-1}.$$
\end{enumerate}
\end{ppt}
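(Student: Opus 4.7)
My plan would be to prove the three items in order, each starting from the weak formulation~\eqref{eq:wf} satisfied by the Galerkin solution $u_{m,\varepsilon}$. For the first estimate I would test~\eqref{eq:wf} with $\varphi = u_{m,\varepsilon}(t)$, which is legal since $u_{m,\varepsilon}(t)$ lies in the Galerkin space. The time-derivative term gives $\frac{1}{2}\frac{d}{dt}\lVert u_{m,\varepsilon}\rVert_{L^2}^2$; the linear viscous term gives $\int_\Omega \lvert D(u_{m,\varepsilon})\rvert^2\,dx = \frac{1}{2}\lVert u_{m,\varepsilon}\rVert_{H_0^1}^2$ by Korn's $L^2$-equality; the non-linear viscous term $\langle j_\varepsilon'(u_{m,\varepsilon}),u_{m,\varepsilon}\rangle_{-1,1} = \int_\Omega F(\sqrt{\varepsilon+\lvert D(u_{m,\varepsilon})\rvert^2})\lvert D(u_{m,\varepsilon})\rvert^2\,dx$ is non-negative by~(C1); and the convective term vanishes by integration by parts since $u_{m,\varepsilon}$ is divergence free and vanishes on $\partial\Omega$. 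The right-hand side is handled by Young's inequality applied to $\langle f,u_{m,\varepsilon}\rangle_{-1,1}$, and integration in $t$ then yields the first estimate.

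For the second estimate I would first establish the pointwise bound $F(\sqrt{\varepsilon + \lvert D(u)\rvert^2})\lvert D(u)\rvert \leq C(1 + \lvert D(u)\rvert^{1-\alpha})$ by splitting on the set where $s := \sqrt{\varepsilon+\lvert D(u)\rvert^2} \leq t_0$ and its complement. When $s \leq t_0$, assumption~(C3) gives $sF(s) \leq t_0 F(t_0)$, so $F(s)\lvert D(u)\rvert \leq sF(s) \leq t_0 F(t_0)$. When $s > t_0$, assumption~(C4) gives $F(s) \leq K s^{-\alpha} \leq K\lvert D(u)\rvert^{-\alpha}$ (using $s \geq \lvert D(u)\rvert$), so $F(s)\lvert D(u)\rvert \leq K \lvert D(u)\rvert^{1-\alpha}$. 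Combining this with the expression of $j_\varepsilon'$ from Lemma~\ref{lem:0}, Cauchy--Schwarz in $x$ and Hölder's inequality (valid since $2(1-\alpha) \leq 2$) give $\lVert j_\varepsilon'(u_{m,\varepsilon})\rVert_{H^{-1}} \leq C(1 + \lVert u_{m,\varepsilon}\rVert_{H_0^1}^{1-\alpha})$. Raising to the $4/N$-th power, integrating on $(0,T)$, and invoking the $L^2((0,T),H_0^1)$ bound from Part~1 yields the claimed estimate.

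For the third estimate I would rearrange~\eqref{eq:wf} to get, for every $\varphi \in H_{0,\sigma}^1(\Omega)$,
\[
  \langle u_{m,\varepsilon}', \varphi\rangle_{-1,1} = \langle f, P_m\varphi\rangle_{-1,1} - \int_\Omega D(u_{m,\varepsilon}):D(P_m\varphi)\,dx - \langle j_\varepsilon'(u_{m,\varepsilon}), P_m\varphi\rangle_{-1,1} + \int_\Omega (u_{m,\varepsilon}\cdot\nabla u_{m,\varepsilon}) \cdot P_m\varphi\,dx,
\]
using that $u_{m,\varepsilon}'(t)$ lies in the Galerkin space and that $\lVert P_m\varphi\rVert_{H_0^1} \leq \lVert\varphi\rVert_{H_0^1}$. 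The three first terms are bounded in $L^{4/N}((0,T),H^{-1})$ by $\lVert f\rVert_{L^2(H^{-1})}$, by $\lVert u_{m,\varepsilon}\rVert_{L^2(H_0^1)}$ from Part~1, and by Part~2, respectively, using the continuous embedding $L^2 \hookrightarrow L^{4/N}$ since $N \geq 2$. For the convective term I would apply Gagliardo--Nirenberg, $\lVert u\otimes u\rVert_{L^2} \leq \lVert u\rVert_{L^4}^2 \leq C \lVert u\rVert_{L^2}^{2-N/2}\lVert u\rVert_{H_0^1}^{N/2}$, which is valid for $N\in\{2,3\}$. Raising to the $4/N$-th power and integrating gives control by $\sup_{[0,T]}\lVert u_{m,\varepsilon}\rVert_{L^2}^{(8/N)-2}\int_0^T \lVert u_{m,\varepsilon}\rVert_{H_0^1}^2\,dt$, which by Part~1 is polynomially bounded in $\lVert u_0\rVert_{L^2}^2 + \lVert f\rVert_{L^2(H^{-1})}^2$; summing with the earlier contributions produces the linear and quadratic dependencies announced in the statement.

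The main obstacle is the piecewise argument in Part~2: it is precisely the dichotomy between the monotonicity of $sF(s)$ near zero given by~(C3) and the growth control at infinity given by~(C4) that converts the mild assumptions on $F$ into an explicit $H^{-1}$ bound with the optimal exponent $1-\alpha$. A secondary difficulty is the Gagliardo--Nirenberg interpolation for the convective term in Part~3, whose exponents depend on the dimension and must be tuned so that $(u_{m,\varepsilon}\cdot\nabla)u_{m,\varepsilon}$ lands exactly in $L^{4/N}((0,T),H^{-1})$; this is what ultimately determines the time-regularity $u' \in L^{4/N}(H^{-1})$ announced in Theorem~\ref{thm:0} and drives the later compactness argument when passing to the limit in $m$ and $\varepsilon$.
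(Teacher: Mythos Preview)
Your proposal is correct and follows essentially the same approach as the paper: test \eqref{eq:wf} with $\varphi=u_{m,\varepsilon}$ and use Korn's equality for Part~1, split according to (C3)/(C4) after a Cauchy--Schwarz step for Part~2, and combine Gagliardo--Nirenberg for the convective term with Parts~1--2 for Part~3. The only cosmetic differences are that in Part~2 you establish the pointwise bound $F(s)\lvert D(u)\rvert \le C(1+\lvert D(u)\rvert^{1-\alpha})$ before integrating whereas the paper splits the integral $\int F^2\lvert D(u)\rvert^2$ directly on $\{\lvert D(u)\rvert\le t_0\}$ and its complement, and in Part~3 you make explicit the projection $P_m\varphi$ (a point the paper leaves implicit).
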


We then focus in the weak convergence with respect to the above estimates. Here, get suitable convergences by taking the limit with respect to the parameter~$\varepsilon$ in a first time, then by taking the limit with respect to the Galerkin parameter~$m$. Thus, before proving Theorem~\ref{thm:0}, we show suitable weak convergence properties.
 
\begin{lem}
  \label{lem:weak-convergence-eps}
  With the hypotheses of Proposition \ref{ppt:5} there exists
  $v_m \in L^2_{\mathrm{loc}}\left(\R_+,H_{0,\sigma}^1(\Omega)\right) \cap
  L^{\infty}_{\mathrm{loc}}\left(\R_+,L^2_{\sigma}(\Omega)\right)$ with
  $\partial_tv_m \in L^{\frac{4}{N}}_{\mathrm{loc}}\left(\R_+,H^{-1}_{\sigma}(\Omega)\right)$ such that,
  up to subsequences:
\begin{enumerate}
\item $\partial_tu_{m,\varepsilon}\rightharpoonup \partial_tv_{m}$ in $L^{\frac{4}{N}}_{\mathrm{loc}}\left(\R_+,H^{-1}_{\sigma}(\Omega)\right)$;
\item $u_{m,\varepsilon} \rightharpoonup v_{m}$ in $L^2_{\mathrm{loc}}\left(\R_+,H_{0,\sigma}^1(\Omega)\right)$;
\item $u_{m,\varepsilon} \rightarrow v_m$ in $L^2_{\mathrm{loc}}(\R_+,L^2_{\sigma}(\Omega))$;
\item $u_{m,\varepsilon} \overset{*}{\rightharpoonup}v_{m}$ in
  $L^{\infty}_{\mathrm{loc}}\left(\R_+,L^2_{\sigma}(\Omega)\right)$.
\end{enumerate}
Moreover,~$v_m$ satisfies, for every fixed~$T > 0$ and all $\psi \in \mathcal{C}^{\infty}((0,T) \times \Omega)$:
  \begin{align}\label{eq:galm} \frac{1}{2}\left(\lVert v_m(T) \rVert_{L^2}^2 - \frac{1}{2}\lVert u_0 \rVert_{L^2}^2\right) - \int_0^T \langle \partial_tv_m,\psi \rangle\; dt + \int_0^T \int_{\Omega} D(v_m):D(v_m - \psi)\; dx\, dt \nonumber\\
 + \int_0^T j(v_m) - j(\psi)\; dt - \int_0^T \int_{\Omega}(v_m\cdot\nabla v_m)\cdot\psi\; dx\, dt \leq \int_0^T \langle f,v_m - \psi \rangle\; dt.
\end{align}
\end{lem}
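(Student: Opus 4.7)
The plan is to extract convergent subsequences from the uniform bounds in Proposition~\ref{ppt:5}, derive a variational inequality at each level $\varepsilon$ via convexity of $j_\varepsilon$, and then pass $\varepsilon \to 0$, with the last step relying on the structural Lemmas~\ref{lem:ae} and~\ref{lem:liminf-eps}. Since Proposition~\ref{ppt:5} provides bounds independent of $\varepsilon$ and $L^{4/N}((0,T), H^{-1})$ is reflexive for $N \in \{2,3\}$, I apply Banach--Alaoglu to obtain weakly convergent subsequences yielding (1) and (2); a weak-$*$ extraction from $L^\infty((0,T), L^2_\sigma) = (L^1((0,T), L^2_\sigma))^*$ gives (4). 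For (3), I invoke the Aubin--Lions--Simon compactness lemma based on the chain $H_0^1 \hookrightarrow\hookrightarrow L^2 \hookrightarrow H^{-1}$ (Rellich--Kondrachov): the $L^2(H_0^1)$ and $W^{1, 4/N}(H^{-1})$ bounds yield compactness in $L^2((0,T), L^2)$. The trace at $t = 0$ survives in the limit because $u_{m,\varepsilon}$ is continuous in time with values in $H^{-1}$, so $v_m(0) = P_m u_0$.

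Next, I derive the variational inequality at level $\varepsilon$ by testing~\eqref{eq:wf} with $\varphi = u_{m, \varepsilon} - \psi$, using the cancellation $\int_\Omega (u_{m, \varepsilon} \cdot \nabla u_{m, \varepsilon}) \cdot u_{m, \varepsilon}\, dx = 0$, and exploiting convexity of $j_\varepsilon$ (Lemma~\ref{lem:0}) in the form $\langle j_\varepsilon'(u_{m, \varepsilon}), u_{m, \varepsilon} - \psi\rangle_{-1, 1} \ge j_\varepsilon(u_{m, \varepsilon}) - j_\varepsilon(\psi)$. After integration on $(0, T)$, this mirrors the structure of~\eqref{eq:galm} with $(v_m, j)$ replaced by $(u_{m, \varepsilon}, j_\varepsilon)$. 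I then pass term by term to $\varepsilon \to 0$. Linear terms paired with the fixed $\psi$, namely $\int \langle u_{m, \varepsilon}', \psi\rangle$, $\int\int D(u_{m, \varepsilon}):D(\psi)$, and $\int\langle f, u_{m, \varepsilon}\rangle$, converge by (1)--(2); the squared quantities $\frac{1}{2}(\|u_{m, \varepsilon}(T)\|_{L^2}^2 - \|P_m u_0\|_{L^2}^2)$ and $\int\int |D(u_{m, \varepsilon})|^2$ are controlled from below by weak lower semicontinuity of the $L^2$ norms; the convection term, rewritten by integration by parts as $-\int u_{m,\varepsilon} \otimes u_{m,\varepsilon} : \nabla \psi$, passes because strong $L^2(L^2)$ convergence from (3) combined with the Sobolev embedding and interpolation in $N \in \{2, 3\}$ gives convergence of $u_{m, \varepsilon} \otimes u_{m, \varepsilon}$ paired against $\nabla \psi$; and for the fixed argument $\psi$, pointwise convergence $G_\varepsilon(t) \to G(t)$ with a domination estimate derived from (C3)--(C4) yields $\int_0^T j_\varepsilon(\psi)\, dt \to \int_0^T j(\psi)\, dt$ by Lebesgue's theorem.

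The main obstacle is the lower-semicontinuity bound for $\int_0^T j_\varepsilon(u_{m, \varepsilon})\, dt$, where both the functional and its argument depend on $\varepsilon$. The plan is to combine the two preparatory lemmas: Lemma~\ref{lem:ae} supplies the almost-everywhere pointwise inequality $|D(u_{m, \varepsilon})| \ge |D(v_m)|$, so monotonicity of $G_\varepsilon$ yields $j_\varepsilon(u_{m, \varepsilon}) \ge j_\varepsilon(v_m)$; Lemma~\ref{lem:liminf-eps} applied at $\varphi = v_m$ then gives $j_\varepsilon(v_m) + C(\varepsilon, v_m) \ge j(v_m)$ with $C(\varepsilon, v_m) \to 0$. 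Integrating in time and taking $\liminf_{\varepsilon \to 0}$ produces $\liminf \int_0^T j_\varepsilon(u_{m, \varepsilon})\, dt \ge \int_0^T j(v_m)\, dt$. Assembling all the limits inside the $\varepsilon$-level inequality then yields~\eqref{eq:galm}.
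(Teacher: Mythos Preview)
Your argument is correct and follows essentially the same route as the paper: extract subsequences from the uniform bounds (reflexivity, Banach--Alaoglu, Aubin--Lions), rewrite the weak formulation as a variational inequality via the convexity of $j_\varepsilon$, and pass to the limit term by term using weak lower semicontinuity for the quadratic terms and the two structural Lemmas for the $j$-terms.

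The only noteworthy difference is the order in which you invoke Lemmas~\ref{lem:ae} and~\ref{lem:liminf-eps}. The paper first applies Lemma~\ref{lem:liminf-eps} to $u_{m,\varepsilon}$ (yielding $j_\varepsilon(u_{m,\varepsilon}) \ge j(u_{m,\varepsilon}) - C(\varepsilon,u_{m,\varepsilon})$) and then Lemma~\ref{lem:ae} with the monotonicity of $G$ to replace $j(u_{m,\varepsilon})$ by $j(v_m)$; you instead apply Lemma~\ref{lem:ae} with the monotonicity of $G_\varepsilon$ first (yielding $j_\varepsilon(u_{m,\varepsilon}) \ge j_\varepsilon(v_m)$) and then Lemma~\ref{lem:liminf-eps} at the \emph{fixed} argument $v_m$. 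Your ordering is slightly cleaner: since $v_m$ does not depend on $\varepsilon$, the remainder $C(\varepsilon,v_m) \to 0$ is immediate from the statement of the lemma, whereas the paper must justify $C(\varepsilon,u_{m,\varepsilon}) \to 0$ with $u_{m,\varepsilon}$ varying. You also handle two points more explicitly than the paper does: you correctly use only weak lower semicontinuity for $\int_0^T\!\int_\Omega |D(u_{m,\varepsilon})|^2$ (the paper's claim of full convergence in~\eqref{eq:10} relies on a strong $L^2(H^1_0)$ convergence that is not established), and you spell out the dominated-convergence argument for $\int_0^T j_\varepsilon(\psi)\,dt \to \int_0^T j(\psi)\,dt$, which the paper leaves implicit.
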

\begin{proof}
  
The first and second points follow from the reflexivity of $L^{\frac{4}{N}}_{\mathrm{loc}}\left(\R_+,H^{-1}_{\sigma}(\Omega)\right)$ and $L^2_{\mathrm{loc}}(\R_+,H_{0,\sigma}^1(\Omega))$ respectively, the third one from Aubin-Lions' Lemma, and the last one by Banach-Alaoglu-Bourbaki's theorem.
  
Then, since~$u_{m,\varepsilon}$ is a solution of \eqref{eq:gal2}, it satisfies \eqref{eq:wf}. Testing against $\varphi = u_{m,\varepsilon} - \psi$ in \eqref{eq:wf} for a test function~$\psi$, we have:

\begin{align}\label{eq:9}\langle \partial_tu_{m,\varepsilon}, u_{m,\varepsilon} -\psi \rangle + \int_{\Omega} D(u_{m,\varepsilon}):D(u_{m,\varepsilon} - \psi)\; dx + \langle j_{\varepsilon}'(u_{m,\varepsilon}),u_{m,\varepsilon} - \psi\rangle \nonumber\\
 - \int_{\Omega}(u_{m,\varepsilon}\cdot\nabla u_{m,\varepsilon})\cdot\psi\; dx  = \langle f,u_{m,\varepsilon} - \psi \rangle.
\end{align}

Using \eqref{eq:J-eps-'} leads to the well-known convexity inequality:

\begin{equation}\label{eq:conv}
j_{\varepsilon}(u_{m,\varepsilon}) - j_{\varepsilon}(\psi) \leq \langle j_{\varepsilon}'(u_{m,\varepsilon}),u_{m,\varepsilon} - \psi\rangle.
\end{equation}

Using now Lemma~\ref{lem:liminf-eps} for~$u_{m,\varepsilon}$ in~\eqref{eq:conv}, we get:

$$j(u_{m,\varepsilon}) -C(\varepsilon,u_{m,\varepsilon}) - j_{\varepsilon}(\psi) \leq \langle j_{\varepsilon}'(u_{m,\varepsilon}),u_{m,\varepsilon} - \psi\rangle$$

and then, by (C3) applied to~$u_{m,\varepsilon}$ for the convergence toward~$v_m$, we get:

$$j(u_{m,\varepsilon}) -C(\varepsilon,u_{m,\varepsilon}) - j_{\varepsilon}(\psi) \leq \langle j_{\varepsilon}'(u_{m,\varepsilon}),u_{m,\varepsilon} - \psi\rangle.$$

Then, we can write (see \cite{evans} part 5.9. for details):

\begin{equation}\label{eq:weakL^2}
\forall \varphi \in H_{0,\sigma}^1(\Omega),\; \int_{\Omega}u_{m,\varepsilon}(T)\varphi\; dx = \langle u_{m,\varepsilon}(T),\varphi \rangle = \int_0^T\langle \partial_tu_{m,\varepsilon}(t),\varphi \rangle\; dt + \left\langle u_0, \varphi \right\rangle.
\end{equation}

Now, we also have, using Proposition \ref{ppt:5}:

\begin{align*}
\int_0^T\langle \partial_tu_{m,\varepsilon}(t),\varphi \rangle\; dt + \left\langle u_0, \varphi \right\rangle &\leq \lVert \partial_tu_{m,\varepsilon}\rVert_{L^{\frac{4}{N}}((0,T),H^{-1})}\left(\int_0^T\lVert \varphi \rVert_{H_0^1}^{\frac{4}{4-N}}\; dt\right)^{\frac{4-N}{4}} + C\lVert u_0 \rVert_{L^2}\lVert \varphi \rVert_{H_0^1} \\
&\leq C\left(T^{\frac{4-N}{4}} + \lVert u_0 \rVert_{L^2}\right)\lVert \varphi \rVert_{H_0^1}.
\end{align*}

In the above inequality we considered~$\varphi$ as a function in $L^{\infty}((0,T),H_0^1(\Omega))$, so it belongs to $L^{\frac{4}{4-N}}((0,T),H_0^1(\Omega))$ and its left-hand side defines a linear form over $L^{\frac{4}{N}}((0,T),H^{-1}(\Omega))$. 

Also, the weak convergence leads to:
\begin{equation}\label{eq:weakL2bis}
\int_0^T\langle \partial_tu_{m,\varepsilon}(t),\varphi \rangle\; dt \underset{\varepsilon \rightarrow 0}{\longrightarrow} \int_0^T\langle \partial_tv_m(t),\varphi \rangle\; dt.
\end{equation}

Finally, \eqref{eq:weakL^2} and \eqref{eq:weakL2bis} imply, up to apply a dominated convergence theorem, to:
\begin{equation}\label{eq:weak-epsT}
u_{m,\varepsilon}(T) \underset{\varepsilon \rightarrow 0}{\rightharpoonup} v_m(T)\quad \text{in} \quad L^2(\Omega).
\end{equation}

Then, \eqref{eq:weak-epsT} implies:

\begin{equation}\label{eq:liminfv'v}
\underset{\varepsilon \rightarrow 0}{\rm \underline{lim}}\,\frac{1}{2}\left(\lVert u_{m,\varepsilon}(T) \rVert_{L^2}^2 - \lVert P_m(u_0) \rVert_{L^2}^2\right) \geq \frac{1}{2}\left(\lVert v_m(T) \rVert_{L^2}^2 - \lVert P_m(u_0) \rVert_{L^2}^2\right)
\end{equation}

Also, from usual estimates (see \cite[Chapter 4]{rrs}), since $u_{m,\varepsilon} \underset{\varepsilon \rightarrow 0}{\rightharpoonup} v_m$ in $L^2((0,T),H_{0,\sigma}^1(\Omega))$, we have up to extract:

\begin{equation}\label{eq:10}
\int_0^T \int_{\Omega}\lvert D(u_{m,\varepsilon}) \rvert^2\; dx\,dt \underset{\varepsilon \rightarrow 0}{\longrightarrow} \int_0^T \int_{\Omega}\lvert D(v_m) \rvert^2\; dx\,dt
\end{equation}

and 

\begin{equation}\label{eq:11}
\int_0^T\int_{\Omega}(u_{m,\varepsilon} \cdot \nabla u_{m,\varepsilon})\cdot \psi\;dx\, dt \underset{\varepsilon \rightarrow 0}{\longrightarrow} \int_0^T\int_{\Omega}(v_m \cdot \nabla v_m)\cdot \psi\;dx\, dt.
\end{equation}

Integrating in time \eqref{eq:9}, and passing to the limit over~$\varepsilon$, combining with \eqref{eq:10}, \eqref{eq:11}, \eqref{eq:liminfv'v}  and Lemma~\ref{lem:ae} leads to~\eqref{eq:galm}.
\end{proof}
 
Arguing in the same way, we obtain the following result.

\begin{lem}\label{sec:weak-convergence-m} Under the assumptions of
  Proposition~\ref{ppt:5}, there exists
  $u \in L^2_{\mathrm{loc}}\left(\R_+,H_{0,\sigma}^1(\Omega)\right) \cap
  L^{\infty}_{\mathrm{loc}}\left(\R_+,L^2_{\sigma}(\Omega)\right)$ with $\partial_tu \in L^{\frac{4}{N}}_{\mathrm{loc}}\left(\R_+,H^{-1}_{\sigma}(\Omega)\right)$ such that the function 
 ~$v_m$ given by Lemma~\ref{lem:weak-convergence-eps} verifies.
\begin{enumerate}
\item $\partial_tv_m\rightharpoonup \partial_tu$ in $L^{\frac{4}{N}}_{\mathrm{loc}}\left(\R_+,H^{-1}_{\sigma}(\Omega)\right)$;
\item $v_m \rightarrow u$ in $L^2_{\mathrm{loc}}\left(\R_+,L^2_{\sigma}(\Omega)\right)$;
\item $v_m \rightharpoonup u$ in $L^2_{\mathrm{loc}}(\R_+,H_{0,\sigma}^1(\Omega))$;
\item $v_m \overset{*}{\rightharpoonup}u$ in $L^{\infty}_{\mathrm{loc}}\left(\R_+,L^2_{\sigma}(\Omega)\right)$.
\end{enumerate}
\end{lem}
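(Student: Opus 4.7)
The plan is to mimic the strategy of Lemma~\ref{lem:weak-convergence-eps}, this time sending the Galerkin index $m$ to infinity instead of the regularization parameter $\varepsilon$.

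First I would observe that the estimates of Proposition~\ref{ppt:5} are uniform in both parameters $m$ and $\varepsilon$, since the constants appearing there depend only on $\alpha$, $\Omega$, $N$, $T$, $\|u_0\|_{L^2}$ and $\|f\|_{L^2((0,T), H^{-1})}$, and since $\|P_m u_0\|_{L^2} \leq \|u_0\|_{L^2}$. Combining this with the convergences already established in Lemma~\ref{lem:weak-convergence-eps} and the weak (resp.\ weak-$*$) lower semicontinuity of the relevant norms, these bounds pass to the limit function $v_m$, providing a uniform-in-$m$ control of $\|v_m\|_{L^2((0,T), H_{0,\sigma}^1)}$, $\|v_m\|_{L^\infty((0,T), L^2_\sigma)}$ and $\|v_m'\|_{L^{4/N}((0,T), H^{-1}_\sigma)}$.

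From these uniform bounds I would extract subsequences (not relabelled) and conclude by the same functional analytic arguments as at the beginning of the proof of Lemma~\ref{lem:weak-convergence-eps}: reflexivity of $L^2((0,T), H_{0,\sigma}^1(\Omega))$ gives assertion (3); reflexivity of $L^{4/N}((0,T), H^{-1}_\sigma(\Omega))$, combined with the identification in $\mathcal{D}'((0,T)\times\Omega)$ of the distributional time derivative of the weak limit as the weak limit of the time derivatives, yields (1); Banach-Alaoglu-Bourbaki's theorem provides the weak-$*$ convergence (4); and the Aubin-Lions lemma, applied with the compact embedding $H_{0,\sigma}^1(\Omega) \hookrightarrow L^2_\sigma(\Omega)$, the continuous embedding $L^2_\sigma(\Omega) \hookrightarrow H^{-1}_\sigma(\Omega)$ and exponents $2$ and $4/N > 1$ (available since $N \in \{2,3\}$), produces the strong convergence (2).

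The main obstacle is not really the extraction of a convergent subsequence itself, which is essentially routine, but rather what must come after this lemma, namely passing to the limit $m \to \infty$ inside the variational inequality~\eqref{eq:galm} in order to identify $u$ as a weak solution in the sense of Definition~\ref{defi:0}. That step will require handling the nonlinear term $\int_0^T j(v_m)\,dt$ via convex weak lower semicontinuity, treating the convective term $\int_0^T\int_\Omega (v_m \cdot \nabla v_m)\cdot \psi\, dx\, dt$ through the strong $L^2$ convergence (2) combined with the Gagliardo-Nirenberg estimate~\eqref{eq:est-u}, and approximating an arbitrary test function $\psi \in L^2((0,T), H_{0,\sigma}^1(\Omega))$ by its Galerkin projections, the remaining error vanishing by density of the union of the Galerkin spaces in $H_{0,\sigma}^1(\Omega)$.
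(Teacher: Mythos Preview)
Your proposal is correct and follows essentially the same approach as the paper, which simply states that this lemma is obtained by ``arguing in the same way'' as Lemma~\ref{lem:weak-convergence-eps}, i.e., uniform bounds from Proposition~\ref{ppt:5} combined with reflexivity, Banach--Alaoglu--Bourbaki, and Aubin--Lions. Your third paragraph goes beyond the lemma itself and anticipates the proof of Theorem~\ref{thm:0}; there the paper treats the nonlinear term $j$ via Lemma~\ref{lem:ae} together with hypothesis~(C3) rather than invoking convex weak lower semicontinuity, but the overall strategy matches what you outline.
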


Moreover, we point out that $u \in C_{w, \mathrm{loc}}(\R_+,L^2_{\sigma}(\Omega))$ from the above estimates (see \cite[Proposition~V.1.7. p.363]{fabrie-boyer} for details). We can now give the proof of Theorem~\ref{thm:0}.

\begin{proof}[Proof of Theorem~\ref{thm:0}]

We take up again the method previously used, that is we write :

\begin{equation}\label{eq:weakL^2'}
\forall \varphi \in H_{0,\sigma}^1(\Omega),\; \int_{\Omega}v_m(T)\varphi\; dx = \langle v_m(T),\varphi \rangle = \int_0^T\langle \partial_tv_m(t),\varphi \rangle\; dt + \left\langle P_m(u_0), \varphi \right\rangle.
\end{equation}

Using Proposition \ref{ppt:5} then leads to:

\begin{align}\label{eq:12}
\int_0^T\langle \partial_tv_m(t),\varphi \rangle\; dt + \left\langle u_0, \varphi \right\rangle &\leq \lVert \partial_tv_m\rVert_{L^{\frac{4}{N}}((0,T),H^{-1})}\left(\int_0^T\lVert \varphi \rVert_{H_0^1}^{\frac{4}{4-N}}\; dt\right)^{\frac{4-N}{4}} + C\lVert u_0 \rVert_{L^2}\lVert \varphi \rVert_{H_0^1} \nonumber \\
&\leq C\left(T^{\frac{4-N}{4}} + \lVert u_0 \rVert_{L^2}\right)\lVert \varphi \rVert_{H_0^1}.
\end{align}

Then, the weak convergence leads to:

\begin{equation}\label{eq:weakL2bis'}
\int_0^T\langle \partial_tv_m(t),\varphi \rangle\; dt \underset{m \rightarrow +\infty}{\longrightarrow} \int_0^T\langle \partial_tu(t),\varphi \rangle\; dt.
\end{equation}

Finally, \eqref{eq:weakL^2'} and \eqref{eq:weakL2bis'} imply:

\begin{equation}\label{eq:weak-epsT'}
v_m(T) \underset{m \rightarrow +\infty}{\rightharpoonup} u(T)\quad \text{in} \quad L^2(\Omega).
\end{equation}
 
Then, \eqref{eq:weak-epsT} implies:

\begin{equation}\label{eq:liminfv'v2}
\underset{m \rightarrow +\infty}{\rm \underline{lim}}\,\frac{1}{2}\left(\lVert v_m(T) \rVert_{L^2}^2 - \lVert P_m(u_0) \rVert_{L^2}^2\right) \geq \frac{1}{2}\left(\lVert u(T) \rVert_{L^2}^2 - \lVert u_0 \rVert_{L^2}^2\right)
\end{equation}

Using once again usual estimates for Navier-Stokes equation, since $v_m \underset{m \rightarrow +\infty}{\longrightarrow} u$ in $L^2((0,T),H_{0,\sigma}^1(\Omega))$, we have:

\begin{equation}\label{eq:10'}
\int_0^T \int_{\Omega}\lvert D(v_m) \rvert^2\; dx\,dt \underset{m \rightarrow +\infty}{\longrightarrow} \int_0^T \int_{\Omega}\lvert D(u) \rvert^2\; dx\,dt
\end{equation}

and 

\begin{equation}\label{eq:11'}
\int_0^T\int_{\Omega}(v_m \cdot \nabla v_m)\cdot \psi\;dx\, dt \underset{m \rightarrow +\infty}{\longrightarrow} \int_0^T\int_{\Omega}(u \cdot \nabla u)\cdot \psi\;dx\, dt.
\end{equation}

Applying lemma \ref{lem:ae} with our assumption (C3) and passing to the limit over~$m$, we get:

\begin{equation}\label{eq:lim-m-j}
\underset{m \rightarrow +\infty}{\rm \underline{lim}}\, \int_0^T j(v_m)\;dt \geq j(u).
\end{equation}

Passing to the limit over~$m$ in \eqref{eq:12}, combining with \eqref{eq:10'}, \eqref{eq:11'}, \eqref{eq:lim-m-j} and \eqref{eq:liminfv'v2} leads to:

\begin{align}\label{eq:sol} \frac{1}{2}\left(\lVert u(T) \rVert_{L^2(\Omega)}^2 - \lVert u_0 \rVert_{L^2(\Omega)}^2\right) -\int_0^T \langle \partial_tu,\psi \rangle\; dt + \int_0^T \int_{\Omega} D(u):D(u - \psi)\; dx\, dt + \int_0^T j(u) - j(\psi)\; dt \nonumber\\
 - \int_0^T \int_{\Omega}(u \cdot\nabla u)\cdot\psi\; dx\, dt \leq \int_0^T \langle f,u - \psi \rangle\; dt
\end{align}
which is the desired result, that is~$u$ is a weak solution of \eqref{eq:0}. 
\end{proof}

\section{Existence of a finite stopping time for shear-thinning flows} \label{sec:finite-stopping-time}

In this part, we assume that hypotheses of the Theorem~\ref{thm:1} are fulfilled. We are interested to show the existence of a finite stopping time of weak solutions of \eqref{eq:0} for a viscosity coefficient~$F$ which behaves at least as a power-law model.

The finite stopping time profile of a flow is specific to the shear-thinning setting, and is a naturally occurring property in many applications. An illustrative example is paints, whose viscosity is expected to decrease as the applied stresses increase, enabling them to spread well, but which are also expected to avoid dripping once the application is complete. To this end, it is expected that the flow will stop rapidly when the fluid is no longer under stress.

From a mathematical point of view, one can observe that nonlinearities proper to Ostwald-DeWaele or Bingham flows in some special cases imply the existence of such a finite stopping time, as it has already been proved for the two-dimensional Bingham equation under some assumptions in~\cite{glowinski}. Moreover, the study of such a profile has been proved in the case of the parabolic~$p$-Laplacian, see \cite[section VII.2]{dibenedetto-degenerate-parabolic} for a bounded initial datum or \cite[Theorem 4.6]{barbu} for the case~$p=1$ and with initial datum belonging to~$L^2(\Omega)$. 

To prove such a result, we proceed to a proof by contradiction. More exactly we show that the regularized Galerkin solutions introduced in the previous section are controlled by a constant~$C(\varepsilon)$ tending to zero as the regularization parameter~$\varepsilon \to 0$. Then, thanks to the energy estimates used previously, we deduce that the solution necessarily stops in finite time, for viscosity coefficients having a form similar to that of~$F(t) = t^{p-2}$, $1 \leq p  < 2$.

In this section, we will moreover assume for convenience that the force term belongs to $L^2_{\mathrm{loc}}(\R_+,L^2(\Omega))$ or, if necessary, we will identify the duality bracket~$\langle\cdot,\cdot\rangle$ with the~$L^2$ inner product. Note that this assumption is not necessary, the results remain valid for $f \in L^2_{\mathrm{loc}}(\R_+,H_{\sigma}^{-1}(\Omega))$. 

Before proving the Theorem~\ref{thm:1}, we need to prove the following useful interpolation lemma, which is a quantified version of J.-L.-Lions' Lemma (see, for instance \cite[Lemme 5.1.]{lions-quelques-resolutions}).

\begin{lem}\label{lem:inequality-extinction} Assume that $u \in L^6(\Omega)$. Then, for all~$r \in (0,3)$, the following inequality holds:

\begin{equation}\label{eq:inequality-extinction}
\lVert u \rVert_{L^2(\Omega)}^{2r} \leq \frac{3-r}{3} \lVert u \rVert_{L^{\frac{3}{2}}(\Omega)}^{\frac{4r}{3-r}} + \frac{r}{3}\lVert u \rVert_{L^6(\Omega)}^2.
\end{equation}

\end{lem}

\begin{proof}[Proof of Lemma~\ref{lem:inequality-extinction}] First, by definition of the~$L^2$-norm, we have, for all~$r>0$:
\begin{equation*}
\lVert u \rVert_{L^2(\Omega)}^{2r} = \Big( \int_{\Omega} \lvert u \rvert^{\frac{4}{3}}  \lvert u \rvert^{\frac{2}{3}}\; dx \Big)^r.
\end{equation*}

The Hölder's inequality in the latter relation leads to

\begin{equation*}
    \| u\|_{L^2(\Omega)}^{2r} \leq \| u\|_{L^{\frac{3}{2}}(\Omega)}^{\frac{4r}{3}}\|u\|_{L^6(\Omega)}^{\frac{2r}{3}},
\end{equation*}

and the usual Young's inequality

\begin{equation*}
    xy \leq \frac{3-r}{3}x^{\frac{3}{3-r}} + \frac{r}{3}y^{\frac{3}{r}},
\end{equation*}

holds for all~$x,y>0$ and~$r\in (0,3)$ directly implies the inequality~\eqref{eq:inequality-extinction}.
\end{proof}

Moreover, we recall the Nirenberg-Strauss inequality:

\begin{lem}[{\cite[Theorem 1]{strauss-71}}]\label{thm:Nirenberg-Strauss} Let~$\Omega$ be an open bounded subset of~$\mathbb{R}^N$ with Lipschitz boundary, then there exists a constant~$C > 0$ which depends of~$N$ and~$\Omega$ such that for all  $u \in W_0^{1,\frac{N}{N-1}}(\Omega)$ the following inequality holds:

\begin{equation}\label{eq:nirenberg-strauss}
\lVert u \rVert_{L^{\frac{N}{N-1}}(\Omega)} \leq C \lVert D(u) \rVert_{L^1(\Omega)}.
\end{equation}
\end{lem}

We are now able to prove Theorem~\ref{thm:1}. We point out that the proof being well-known in the two-dimensional case (see \cite{glowinski})  and can be in that last case a direct application of the Korn's inequality and Sobolev's embedding theorem. For this reason, we only give a proof in the three-dimensional setting.

\begin{proof}[Proof of Theorem~\ref{thm:1}]

Let~$u_{m,\epsilon}$ be the solution of \eqref{eq:gal2}. Choosing~$\varphi = u_{m,\varepsilon}$ in \eqref{eq:wf} we get:

\begin{equation}\label{eq:eq-vm-vm}
  \langle u_{m,\varepsilon}', u_{m,\varepsilon} \rangle 
  +\int_{\Omega} \lvert D(u_{m,\varepsilon}) \rvert^2 \; dx 
  + \langle j_{\varepsilon}'(u_{m,\varepsilon}),u_{m,\varepsilon} \rangle - \underbrace{\int_{\Omega} \left(u_{m,\varepsilon} \nabla u_{m,\varepsilon}\right)u_{m,\varepsilon} \; dx}_{=0} = \langle f, u_{m,\varepsilon} \rangle. 
\end{equation}

Combining~\eqref{eq:J-eps-'} and~\eqref{eq:F-alpha}, we obtain
\[
\langle j_{\varepsilon}'(u_{m,\varepsilon}),u_{m,\varepsilon} \rangle \geq \kappa^{p-2} \int_{\Omega} \lvert D(u_{m,\varepsilon}) \rvert^2\left(\varepsilon + \lvert D(u_{m,\varepsilon})\rvert^2\right)^{\frac{p-2}{2}}\; dx.
\]
Using $\lvert D(u_{m,\varepsilon}) \rvert^2 = (\varepsilon + \lvert D(u_{m,\varepsilon}) \rvert^2) - \varepsilon$ we write
\[
\langle j_{\varepsilon}'(u_{m,\varepsilon}),u_{m,\varepsilon} \rangle
\geq
\kappa^{p-2} \int_{\Omega} \left(\varepsilon + \lvert D(u_{m,\varepsilon})\rvert^2\right)^{\frac{p}{2}}\; dx - 
\kappa^{p - 2} \int_{\Omega} \varepsilon \left(\varepsilon + \lvert D(u_{m,\varepsilon})\rvert^2\right)^{\frac{p-2}{2}} \; dx.
\]

Since $1\leq p<2$, we deduce

\begin{align}
\langle j_{\varepsilon}'(u_{m,\varepsilon}),u_{m,\varepsilon} \rangle
& \geq
\kappa^{p-2} \int_{\Omega} \lvert D(u_{m,\varepsilon})\rvert^p\; dx - 
\kappa^{p - 2} \varepsilon\int_{\Omega}  \varepsilon^{\frac{p-2}{2}} \; dx \nonumber\\
& \geq \kappa^{p-2} \|D(u_{m, \varepsilon})\|_{L^p(\Omega)}^p - 
\kappa^{p-2} \varepsilon^{\frac{p}{2}}|\Omega|. \label{eq:j-estimate}
\end{align}

From \eqref{eq:eq-vm-vm} and~\eqref{eq:j-estimate}, we get:

\begin{equation}\label{eq:est-ext}
\frac{1}{2}\frac{d}{dt}\left(\lVert u_{m,\varepsilon}(t)\rVert_{L^2(\Omega)}^2\right) + \lVert D(u_{m,\varepsilon}) \rVert_{L^{2}(\Omega)}^2 + \kappa^{p -2} \lVert D(u_{m,\varepsilon}) \rVert_{L^{p}(\Omega)}^{p} \leq \langle f,u_{m,\varepsilon} \rangle + \kappa^{p -2} \lvert \Omega \rvert \varepsilon^{\frac{p}{2}}.
\end{equation}

Then, using successively the embedding $L^{p}(\Omega) \hookrightarrow L^1(\Omega)$, assumption~\eqref{eq:F-alpha} and the Lemma~\ref{thm:Nirenberg-Strauss}, we get from 
\eqref{eq:est-ext}, for $t \geq T_1$:
\begin{equation}\label{eq:est-ext-2}
\frac{1}{2}\frac{d}{dt}\left(\lVert u_{m,\varepsilon}(t)\rVert_{L^2(\Omega)}^2\right) + \lVert D(u_{m,\varepsilon}) \rVert_{L^2(\Omega)}^2 + C\lVert u_{m,\varepsilon} \rVert_{L^{\frac{3}{2}}(\Omega)}^{p} \leq \kappa^{p -2} \lvert \Omega \rvert \varepsilon^{\frac{p}{2}}.
\end{equation}

Now, from the embedding $\left\{u \in H_0^1(\Omega) / \lVert D(u) \rVert_{L^2(\Omega)} < +\infty\right\} \hookrightarrow L^6(\Omega)$ which can be obtained using Korn's $L^2$ equality and Sobolev embedding $H_0^1(\Omega) \hookrightarrow L^6(\Omega)$ we get from \eqref{eq:est-ext-2}:

\begin{equation}\label{eq:est-ext-3}
\frac{1}{2}\frac{d}{dt}\left(\lVert u_{m,\varepsilon}(t)\rVert_{L^2(\Omega)}^2\right) + C \lVert u_{m,\varepsilon} \rVert_{L^6(\Omega)}^2 + C \lVert u_{m,\varepsilon} \rVert_{L^{\frac{3}{2}}(\Omega)}^{p} \leq \kappa^{p-2} \lvert \Omega \rvert \varepsilon^{\frac{p}{2}}.
\end{equation}

Applying Lemma~\ref{lem:inequality-extinction} with~$r = \frac{3p}{4+p}$ (which satisfies~$\frac{3}{5}\leq r <1$ since~$1 \leq p < 2$), we have
\begin{align}\label{eq:inequality-extinction-alpha}
    \|u_{m,\varepsilon} \|_{L^2(\Omega)}^{1 + \frac{5p-4}{4+p}} 
    & 
    \leq \frac{4}{4+p}\|u_{m,\varepsilon}\|_{L^{\frac{3}{2}}(\Omega)}^p + \frac{p}{4+p}\| u_{m,\varepsilon} \|_{L^6(\Omega)}^2 \nonumber \\
    & \leq \|u_{m,\varepsilon}\|_{L^{\frac{3}{2}}(\Omega)}^p + \| u_{m,\varepsilon} \|_{L^6(\Omega)}^2.
\end{align}

Note that the exponent~$s(p) := \frac{5p -4}{4+p}$ in the left-hand side is positive since we have chosen~$p \geq 1$. Combining~\eqref{eq:inequality-extinction-alpha} with \eqref{eq:est-ext-3} we deduce that there exists a constant~$C_0>0$ (which does not depend on~$\varepsilon$) such that, for~$t \geq T_1$:

\begin{equation}\label{eq:est-ext-4}
\frac{1}{2}\frac{d}{dt}\left(\lVert u_{m,\varepsilon}(t)\rVert_{L^2(\Omega)}^2\right) + C_0\lVert u_{m,\varepsilon} \rVert_{L^2(\Omega)}^{1 +s(p)} \leq \kappa^{p-2} \lvert \Omega \rvert \varepsilon^{\frac{p}{2}}.
\end{equation}

Assume that for all~$t \geq T_1$ we have $C_0\lVert u_{m,\varepsilon}(t) \rVert_{L^2(\Omega)}^{1 +s(p)} \geq 2\kappa^{p-2} \lvert \Omega \rvert \varepsilon^{\frac{p}{2}}$. Then, we can write from \eqref{eq:est-ext-4}:

\begin{equation}\label{eq:est-ext-5}
\frac{1}{2}\frac{d}{dt}\left(\lVert u_{m,\varepsilon}(t)\rVert_{L^2(\Omega)}^2\right)
\leq
-\frac{C_0}{2}\lVert u_{m,\varepsilon} \rVert_{L^2(\Omega)}^{1 +s(p)}.
\end{equation}

Then dividing by $\lVert u_{m,\varepsilon}(t) \rVert_{L^2(\Omega)}^{1 +s(p)}$ both sides of~\eqref{eq:est-ext-5}, we obtain for all $t \geq T_1$:

\begin{equation}\label{eq:est-ext-6}
\frac{d}{dt}\left(\lVert u_{m,\varepsilon}(t) \rVert_{L^2(\Omega)}^{1-s(p) }\right) \leq -\frac{C_0}{2}(1-s(p)).
\end{equation}

Note that $s(p) < 1$ since $p < 2$. Integrating~\eqref{eq:est-ext-6} with respect to the time leads to $\lVert u_{m,\varepsilon}(t) \rVert_{L^2(\Omega)}^{1-s(p)} < 0$ for~$t$ large enough. This is a contradiction. Consequently, there exists a time $T_{0,\varepsilon} \geq T_1$ such that $C_0\lVert u_{m,\varepsilon}(T_{0,\varepsilon}) \rVert_{L^2(\Omega)}^{1 +s(p)} \leq 2\kappa^{p-2} \lvert \Omega \rvert \varepsilon^{\frac{p}{2}}$, thus the decay of kinetic energy for smooth Galerkin solutions implies that $C_0\lVert u_{m,\varepsilon}(t) \rVert_{L^2(\Omega)}^{1 +s(p)} \leq 2\kappa^{p-2} \lvert \Omega \rvert \varepsilon^{\frac{p}{2}}$ for all $t \geq T_{0,\varepsilon}$. Moreover, considering~$T_{0,\varepsilon}$ as being the smallest time satisfying such an inequality, we get that for~$t \in [T_1,T_{0,\varepsilon}]$, we have that $C_0\lVert u_{m,\varepsilon}(t) \rVert_{L^2(\Omega)}^{1 +s(p)} \geq 2\kappa^{p-2} \lvert \Omega \rvert \varepsilon^{\frac{p}{2}}$, which means that $\lVert u_{m,\varepsilon}(t) \rVert_{L^2(\Omega)}^{-(1 +s(p))} \leq \frac{C_0}{2\kappa^{p-2} \lvert \Omega \rvert \varepsilon^{\frac{p}{2}}}$. Dividing \eqref{eq:est-ext-4} by $\lVert u_{m,\varepsilon}(t) \rVert_{L^2(\Omega)}^{1 +s(p)}$ for $t \in [T_1,T_{0,\varepsilon}]$ then leads once again to

\begin{equation*}
    \frac{d}{dt}\left(\| u_{m,\varepsilon}(t) \|_{L^2}^{1-s(p)}\right) \leq -\frac{C_0}{2}(1-s(p)),
\end{equation*}

which in turn leads, after integrating over $[T_1,T_{0,\varepsilon}]$, to

\begin{equation*}
  \|u_{m,\varepsilon}(T_{0, \varepsilon})\|_{L^2}^{1 - s(p)}
  - \|u_{m,\varepsilon}(T_1)\|_{L^2}^{1 - s(p)} \le -\frac{C_0}{2}(1-s(p))(T_{0, \varepsilon} - T_1)
\end{equation*}
so that it implies
\begin{equation}\label{eq:estimate-T0epsilon}
T_{0, \varepsilon} \le T_1 + \frac{2 \|u_{m,\varepsilon}(T_1)\|_{L^2}^{1 - s(p)}}{C_0(1 - s(p))} \le T_1 +
\frac{2 \left(\|u_0\|_{L^2}+ \|f\|_{L^2((0, T_1), H^{-1}_{\sigma})}\right)^{1 - s(p)}}{C_0(1 - s(p))}.
\end{equation}

Namely, the sequence $(T_{0,\varepsilon})_{\varepsilon > 0}$ is uniformly bounded following the parameter~$\varepsilon > 0$. Thus, letting~$\varepsilon \rightarrow 0$ leads to the existence of~$T_0$ (which may depends of~$m$) such that $\lVert v_m(t) \rVert_{L^2(\Omega)} = 0$ for almost all $t \in [T_0,+\infty)$ and then $\lVert v_m \rVert_{L^2([T_0,+\infty),L^2(\Omega))} = 0$. The same line of arguments shows the existence of a finite stopping time for~$u$ in the sense of Definition~\ref{FST}. This concludes the proof.
\end{proof}

\appendix
\section{Some examples of viscosity coefficients}\label{sec:some-exampl-funct}

 In this section, we give some examples of functions~$F$ satisfying
 the conditions (C1)-(C4), most of which correspond to models of
 non-Newtonian coherent flows in the physical sense. This is the case
 for quasi-Newtonian fluids such as blood, threshold fluids such as
 mayonnaise, or more generally in the case of polymeric liquids.
 
 \begin{enumerate}

\item Firstly, in order to describe power-law fluids (also known as Ostwald-DeWaele flows), we can consider functions $(F_{p})_{1 < p < 2}$ given by:

\begin{center}
\begin{tabular}{l|l}
&$(0,+\infty) \rightarrow (0,+\infty)$\\
$F_{p}$ :&\\
&$t \longmapsto t^{p-2}$.
\end{tabular}
\end{center}

\item Considering functions $(F_{\mu,p})_{\mu > 0, p \in
    [1,2)}$ of the form

\begin{center}
\begin{tabular}{l|l}
&$(0,+\infty) \rightarrow (0,+\infty)$\\
$F_{\mu,p}$ :&\\
&$t \longmapsto (\mu + t^2)^{\frac{p-2}{2}}$
\end{tabular}
\end{center}
leads to Carreau flows.

\item Cross fluids are obtained by choosing function $(F_{\gamma,p})_{\gamma > 0, p \in [1,2)}$ given by:

\begin{center}
\begin{tabular}{l|l}
&$(0,+\infty) \rightarrow (0,+\infty)$\\
$F_{\gamma,p}$ :&\\
&$t \longmapsto (\gamma + t^{2-p})^{-1}$.
\end{tabular}
\end{center}

\item Another possible choice is to take functions $(F_{p,\beta,\gamma})$ given
\begin{center}
\begin{tabular}{l|l}
&$(0,+\infty) \rightarrow (0,+\infty)$\\
$F_{p,\beta,\gamma}$ :&\\
&$t \longmapsto$ $\left\{\begin{tabular}{ll}$t^{p-2}\text{log}(1+t)^{-\beta}$&$\quad \text{if} \; t \in (0,\gamma]$ \\ & \\ $\text{log}(1+\gamma)^{-\beta}t^{p-2}$&$\quad \text{if}\; t \in (\gamma,+\infty)$\end{tabular}\right.$
\end{tabular}
\end{center}
for $1 < p < 2$ and some $\beta,\gamma>0$ with~$\gamma$ small enough.

\end{enumerate}

\section{Useful lemmas and energy estimates}\label{app:lemmas}

For the sake of clarity, in this appendix, we state and prove some useful results employed for the proof of Theorem~\ref{thm:0}. We begin this appendix with some technical lemmas and, in its second part, we give a proof for Proposition~\ref{ppt:5}.

\subsection{Technical lemmas} 

\begin{lem}\label{lemp:6} Let~$X$ be a Banach space, and~$\gamma \geq \frac{1}{2}$. Then, the following inequality holds:
$$\forall (u,v) \in X^2,\; \lVert u + v \rVert^{\gamma}_X \leq 2^{\left(\gamma - \frac{1}{2}\right)}\left(\lVert u \rVert^{\gamma}_X + \lVert v \rVert^{\gamma}_X\right).$$
\end{lem}
\begin{proof}
  Using the convexity of $t \mapsto t^{2(2-p)}$ and triangle's inequality of the norm, we get:
$$\lVert u + v \rVert^{2\gamma}_X = 2^{2\gamma}\left\lVert \frac{u+v}{2} \right\rVert^{2\gamma}_X \leq 2^{2\gamma -1}\left(\lVert u \rVert^{2\gamma}_X + \lVert v \rVert^{2\gamma}_X\right).$$

 Applying now the well-known inequality: $\forall (a,b) \in [0,+\infty)^2,\; \sqrt{a+b} \leq \sqrt{a} + \sqrt{b}$, we get the result.
\end{proof}

 \begin{lem}\label{lem:liminf-eps} Consider that $\varphi \in L^2_{\mathrm{loc}}(\R_+,H_0^1(\Omega))$, then there exists a constant $C(\varepsilon,\varphi) > 0$ which goes to zero as $\varepsilon$ does, such that the following inequality holds:
\begin{equation}\label{eq:liminf-eps}
  j_{\varepsilon}(\varphi) + C(\varepsilon,\varphi) \geq j(\varphi),
\end{equation}
where $j_\varepsilon$ and $j$ are defined by~\eqref{eq:J-eps}.
\end{lem}
 
\begin{proof} Recalling that the assumption (C3) states that $t \mapsto tF(t)$ is increasing,  we get:
\begin{align*}
j(\varphi) &:= \int_{\Omega}\int_0^{\lvert D(\varphi)\rvert} sF(s)\; ds\, dx\\
& \leq \int_{\Omega}\int_0^{\sqrt{\varepsilon}} sF(s)\; ds\, dx + \int_{\Omega}\int_{\sqrt{\varepsilon}}^{\sqrt{\varepsilon} + \lvert D(\varphi)\rvert} sF(s)\; ds\, dx\\
&\leq \varepsilon\sqrt{\varepsilon}F(\varepsilon)\lvert \Omega \rvert + \int_{\Omega}\int_0^{\sqrt{2\lvert D(\varphi) \rvert\sqrt{\varepsilon} + \lvert D(\varphi)\rvert^2}}sF(\sqrt{\varepsilon + s^2})\; ds\, dx\\
&\leq \underbrace{\varepsilon\sqrt{\varepsilon}F(\varepsilon)\lvert \Omega \rvert + \int_{\Omega}\int_{\lvert D(\varphi) \rvert}^{2^{\frac{1}{2}}\varepsilon^{\frac{1}{4}}\lvert D(\varphi) \rvert^{\frac{1}{2}} + \lvert D(\varphi) \rvert}sF(\sqrt{\varepsilon + s^2})\; ds \,dx}_{:= C(\varepsilon,\varphi)} + j_{\varepsilon}(\varphi),
\end{align*}
which is the wished result.
\end{proof}

\begin{lem}\label{lem:ae} Consider $\Omega$ an open bounded subset of $\mathbb{R}^N$ with Lipschitz boundary, and a sequence $(w_n)_{n \in \mathbb{N}}$ such that there exists a positive constant $C > 0$ satisfying $\lVert w_n \rVert_{L^2_{\mathrm{loc}}(\R_+,H_{0,\sigma}^1(\Omega)} \leq C$. Then, for every fixed $T > 0$ and for almost all $(t,x) \in (0,T) \times \Omega$, the following inequality holds:
\begin{equation*}
\underset{n \rightarrow +\infty}{\underline{\mathrm{lim}}}\; \lvert D(w_n)(t,x) \rvert \geq \lvert D(w)(t,x) \rvert.
\end{equation*}
\end{lem}

\begin{proof}
Firstly, let us recall that Eberlein-\u{S}mulyan theorem leads up to an extraction to $w_n \rightharpoonup w$ in $L^2_{\mathrm{loc}}(\R_+,H_0^1(\Omega))$ then, for every fixed $T > 0$ and all Lebesgue points $t_0 \in (0,T)$ and $x_0 \in \Omega$, for all $\delta > $ and $R > 0$ small enough, we have $w_n \rightharpoonup w$ in $L^2((t_0 - \delta, t_0 + \delta),H^1(B(x_0,R))$. Indeed, we have for all test function $\varphi$ :

$$\int_0^T\int_{\Omega}\nabla w_n \cdot \nabla \varphi \;dt\,dx \underset{n \rightarrow +\infty}{\longrightarrow} \int_0^T\int_{\Omega}\nabla w \cdot \nabla \varphi \;dt\,dx.$$

Hence, we can take $\varphi$, which belongs to $\mathcal{C}^{\infty}_0((t_0 - \delta,t_0 + \delta) \times B(x_0,R))$ (up to arguing by density thereafter), satisfying:

$$\nabla \varphi = \left\{\begin{tabular}{ll}
$\nabla\psi$& on $(t_0 - \delta, t_0 + \delta) \times B(x_0,R)$\\
$0$& on $(0,T) \times \Omega \backslash (t_0 - \delta, t_0 + \delta) \times B(x_0,R)$
\end{tabular}\right.$$

and so this leads to:

$$\int_{t_0- \delta}^{t_0 + \delta}\int_{B(x_0,R)}\nabla w_n \cdot \nabla \psi \;dt\,dx \underset{n \rightarrow +\infty}{\longrightarrow} \int_{t_0 - \delta}^{t_0 + \delta}\int_{B(x_0,R)}\nabla w \cdot \nabla \psi \;dt\,dx.$$

That is $w_n\rightharpoonup w$ in $L^2((t_0 - \delta, t_0 + \delta),H^1(B(x_0,R)))$. Also, from Korn's $L^2$ equality and Lebesgue's differentiation theorem over $(\delta,R)$ after dividing by $2\delta\lvert B(x_0,R)\rvert$, one gets that for every Lebesgue point $(t_0,x_0) \in (0,T) \times \Omega$:

$$\lvert D(w_n(t_0,x_0)) \rvert^2 \leq C$$

Following the same line of arguments, we find that:

$$\underset{n \rightarrow +\infty}{\underline{\mathrm{lim}}}\;\int_{t_0 - \delta}^{t_0 + \delta}\int_{B(x_0,R)} \lvert D(w_n) \rvert^2\;dx \,dt \geq \int_{t_0 - \delta}^{t_0 + \delta}\int_{B(x_0,R)} \lvert D(w) \rvert^2\;dx \,dt.$$

Dividing each side by $2\delta \lvert B(x_0,R) \rvert$, we get:

$$\underset{n \rightarrow +\infty}{\underline{\mathrm{lim}}}\;\fint_{t_0 - \delta}^{t_0 + \delta}\fint_{B(x_0,R)} \lvert D(w_n) \rvert^2\;dx \,dt \geq \fint_{t_0 - \delta}^{t_0 + \delta}\fint_{B(x_0,R)} \lvert D(w) \rvert^2\;dx \,dt$$

then letting $(\delta,R) \rightarrow (0,0)$ leads to the result, after applying a dominated convergence theorem.
\end{proof}

\subsection{Proof of Proposition~\ref{ppt:5}}

We now prove the energy estimates used for the convergence of the nonlinear Galerkin method appearing in the proof of Theorem~\ref{thm:0}. 



\begin{proof}[Proof of Proposition~\ref{ppt:5}]
  \begin{enumerate}
    \item[]
  \item 
  Setting $\varphi = u_{m,\varepsilon}$ in the weak formulation, we get:
  \[
    \frac{1}{2}
    \frac{d}{dt}\lVert u_{m,\varepsilon} \rVert_{L^2}^2 + \int_{\Omega}
  \lvert D(u_{m,\varepsilon}) \rvert^2\; dx + \underbrace{\langle
    j_{\varepsilon}'(u_{m,\varepsilon}),u_{m,\varepsilon}
    \rangle}_{\geq 0} -
  \underbrace{\int_{\Omega}(u_{m,\varepsilon}\cdot\nabla
    u_{m,\varepsilon})\cdot u_{m,\varepsilon}\; dx}_{=0}  = \langle
  f,u_{m,\varepsilon} \rangle.
  \]

Using the Korn's $L^2$ equality for divergence free vectors fields, we get

$$\frac{d}{dt}\lVert u_{m,\varepsilon}(t) \rVert_{L^2}^2 +  \lVert u_{m,\varepsilon}(t) \rVert_{H_0^1}^2 \leq 2\left\langle f(t),u_{m,\varepsilon}(t) \right\rangle \leq 2\lVert f(t) \rVert_{H^{-1}}^2 + \frac{1}{2}\lVert u_{m,\varepsilon}(t) \rVert_{H_0^1}^2.$$

Then, integrating on $(0,t)$ we get

\begin{equation}\label{eq:4a}
\lVert u_{m,\varepsilon}(t) \rVert^2_{L^2} + \frac{1}{2}\int_0^t\lVert u_{m,\varepsilon} \rVert^2_{H_0^1}\; dt \leq 2\int_0^t \lVert f \rVert^2_{H^{-1}}\; dt + \lVert u_0 \rVert_{L^2}^2.
\end{equation}

Indeed, we recall that $(P_m(u_0),w_i)_{L^2} = (u_0,P_mw_i)_{L^2} = (u_0,w_i)_{L^2}$, and the conclusion follows. From now on, we will omit to detail this last part which is usual.

\item We have, using Cauchy-Schwarz's inequality and Korn's equality in the divergence free $L^2$ setting:
\begin{align}
\left\langle
  j_{\varepsilon}'(u_{m,\varepsilon}),\varphi\right\rangle &=
                                                                     \int_{\Omega}F\left(\sqrt{\varepsilon
                                                                     +
                                                                     \lvert
                                                                     D(u_{m,\varepsilon})
                                                                     \rvert^2}\right)D(u_{m,\varepsilon}):D(\varphi)\;
                                                                     dx
  \nonumber \\
& \leq \frac{1}{\sqrt{2}}\left(\int_{\Omega}F\left(\sqrt{\varepsilon +
                 \lvert D(u_{m,\varepsilon}) \rvert^2}\right)^2\lvert
                 D(u_{m,\varepsilon}) \rvert^2\;
                 dx\right)^{\frac{1}{2}} \lVert \varphi
                 \rVert_{H_0^1}. \label{eq:est-j'eps}
\end{align}

From hypothesis~(C4), setting $A = \Omega \cap \{ \lvert D(u_{m,\varepsilon}) \rvert \leq t_0 \}$ and $B$ its complement in~$\Omega$, we obtain

\begin{align*}
\int_{\Omega}F\left(\sqrt{\varepsilon +  \lvert D(u_{m,\varepsilon}) \rvert^2}\right)^2\lvert D(u_{m,\varepsilon}) \rvert^2\; dx &= \int_{A}F\left(\sqrt{\varepsilon +  \lvert D(u_{m,\varepsilon}) \rvert^2}\right)^2\lvert D(u_{m,\varepsilon}) \rvert^2\; dx\\
& + \int_{B}F\left(\sqrt{\varepsilon +  \lvert D(u_{m,\varepsilon}) \rvert^2}\right)^2\lvert D(u_{m,\varepsilon}) \rvert^2\; dx.
\end{align*}

Let's estimate these two integrals independently. By assumption~(C3), we have that the application $t \mapsto t^2F\left(\sqrt{\varepsilon + t^2}\right)^2$ is non-decreasing, and we obtain directly:

\begin{align*}
\int_{A}F\left(\sqrt{\varepsilon +  \lvert D(u_{m,\varepsilon}) \rvert^2}\right)^2\lvert D(u_{m,\varepsilon}) \rvert^2\; dx &\leq F\left(\sqrt{\varepsilon +  {t_0}^2}\right)^2{t_0}^2\lvert A \rvert\\
& \leq F\left(\sqrt{\varepsilon +  {t_0}^2}\right)^2{t_0}^2\lvert \Omega \rvert\\
& \leq F\left(\sqrt{1+  {t_0}^2}\right)^2\sqrt{1+ {t_0}^2}\lvert \Omega \rvert\\
& \leq C.
\end{align*}

Then we have, using again~(C4):

\begin{align*}
\int_{B}F\left(\sqrt{\varepsilon +  \lvert D(u_{m,\varepsilon}) \rvert^2}\right)^2\lvert D(u_{m,\varepsilon}) \rvert^2\; dx &\leq K\int_{B}\frac{\lvert D(u_{m,\varepsilon}) \rvert^2}{\left(\varepsilon + \lvert D(u_{m,\varepsilon}) \rvert^2\right)^{2-p}}\; dx\\
& \leq K\int_{B}\lvert D(u_{m,\varepsilon}) \rvert^{2(p-1)}\; dx\\
& \leq K\int_B\lvert \nabla u_{m,\varepsilon} \rvert^{2(p-1)}\; dx\\
&\leq C\lVert u_{m,\varepsilon} \rVert^{2(p-1)}_{H_0^1},
\end{align*}

where we used Jensen's inequality in the concave setting with $t \mapsto t^{p-1}$ in the last line. So, we obtain:
 
\begin{equation}\label{eq:5}
\left(\int_{\Omega}F\left(\sqrt{\varepsilon +  \lvert D(u_{m,\varepsilon}) \rvert^2}\right)^2\lvert D(u_{m,\varepsilon}) \rvert^2\; dx\right)^{\frac{1}{2}} \leq \left(C + C\lVert u_{m,\varepsilon} \rVert^{2(p-1)}_{H_0^1}\right)^{\frac{1}{2}}.
\end{equation}

Thus, combining the inequality~\eqref{eq:est-j'eps}-\eqref{eq:5}, using Lemma \ref{lemp:6} with $\gamma = \frac{2}{N}$, and integrating in time leads to:

$$\lVert j_{\varepsilon}'(u_{m,\varepsilon}) \rVert^{\frac{4}{N}}_{L^{\frac{4}{N}}\left((0,T),H^{-1}\right)} \leq C + C\lVert u_{m,\varepsilon} \rVert^{\frac{4(p-1)}{N}}_{L^{\frac{4(p-1)}{N}}((0,T),H_0^1)}.$$

Then, since $0<\frac{4(p-1)}{N} \leq 2$, we get, using the embedding $L^2 \hookrightarrow L^{\frac{4(p-1)}{N}}$ and Lemma~\ref{lemp:6} with $X := H_0^1$, $q = \frac{4(p-1)}{N}$ and $p=2$ on $\lVert u_{m,\varepsilon} \rVert_{L^{\frac{4(p-1)}{N}}((0,T),H_0^1)}$:

$$\lVert j_{\varepsilon}'(u_{m,\varepsilon}) \rVert^{\frac{4}{N}}_{L^{\frac{4}{N}}\left((0,T),H^{-1}\right)} \leq C + C\lVert u_{m,\varepsilon} \rVert^{\frac{4(p-1)}{N}}_{L^{2}((0,T),H_0^1)}.$$

 Using the first point of the proposition for $t=T$, and since $\frac{4(p-1)}{N} \geq 0$, we get:

$$\lVert j_{\varepsilon}'(u_{m,\varepsilon}) \rVert^{\frac{4}{N}}_{L^{\frac{4}{N}}\left((0,T),H^{-1}\right)} \leq C + C(\lVert f \rVert_{L^{2}((0,T),H^{-1})} + \lVert u_0 \rVert_{L^2})^{\frac{4(p-1)}{N}}.$$

Then, using the exponent $\frac{N}{4}$ on both sides and applying once again Lemma~\ref{lemp:6} with $\gamma = \frac{N}{4}$ on the right-hand side in the inequality above leads us to:

$$\lVert j_{\varepsilon}'(u_{m,\varepsilon}) \rVert_{L^{\frac{4}{N}}\left((0,T),H^{-1}\right)} \leq C +C(\lVert f \rVert_{L^{2}((0,T),H^{-1})} + \lVert u_0 \rVert_{L^2})^{p-1}.$$

This is the wished result. 

\item From the weak formulation~\eqref{eq:wf} we get

  \begin{equation}
     \label{eq:wf-direct}
   \langle \partial_tu_{m,\varepsilon},\varphi \rangle = - \int_{\Omega}
D(u_{m,\varepsilon}):D(\varphi)\; dx - \langle
j_{\varepsilon}'(u_{m,\varepsilon}),\varphi \rangle +
\int_{\Omega}(u_{m,\varepsilon} \cdot \nabla u_{m,\varepsilon}) \cdot \varphi\; dx + \langle f,\varphi \rangle.  
   \end{equation}

Let us point out that

   \begin{equation}
     \label{eq:Dphi}
     \int_{\Omega}D(u_{m,\varepsilon}):D(\varphi)\; dx = \frac{1}{2}\int_{\Omega}\nabla u_{m,\varepsilon}\cdot\nabla \varphi\; dx \leq \frac{1}{2}\lVert u_{m,\varepsilon} \rVert_{H_0^1}\rVert \varphi \rVert_{H_0^1}.
   \end{equation}

Also, from Gagliardo-Nirenberg's inequality, we get the existence of a positive constant $C$ which only depends on $N$ and $\Omega$ such that:
 
\begin{equation}\label{eq:est-u}
\lVert u \rVert_{L^4}^2 \leq C\lVert \nabla
u\rVert_{L^2}^{\frac{N}{2}}\lVert u \rVert_{L^2}^{\frac{4-N}{2}}.
\end{equation}

The latter leads, as for the Navier-Stokes equations:

\begin{equation*} \left\lvert \int_{\Omega}(u_{m,\varepsilon}.\nabla
  u_{m,\varepsilon}).\varphi\; dx \right\rvert \leq C \lVert u_{m,\varepsilon} \rVert_{L^2}^{\frac{4-N}{2}}\lVert
                                   u_{m,\varepsilon}
                                   \rVert_{H_0^1}^{\frac{N}{2}}\lVert
                                   \varphi \rVert_{H_0^1}. \label{eq:est-conv}
\end{equation*}

So, putting~\eqref{eq:Dphi}--\eqref{eq:est-conv} and the second estimate of the Proposition~\ref{ppt:5} in~\eqref{eq:wf-direct}, we obtain
\begin{align*}
\langle \partial_tu_{m,\varepsilon},\varphi \rangle \leq &\; \frac{1}{2}\lVert u_{m,\varepsilon} \rVert_{H_0^1}\lVert \varphi \rVert_{H_0^1} + \lVert j_{\varepsilon}'(u_{m,\varepsilon}) \rVert_{H^{-1}}\lVert \varphi \rVert_{H_0^1} +C \lVert u_{m,\varepsilon} \rVert_{L^2}^{\frac{4-N}{2}}\lVert  u_{m,\varepsilon} \rVert_{H_0^1}^{\frac{N}{2}}\lVert \varphi \rVert_{H_0^1}\\
& + \lVert f \rVert_{H^{-1}}\lVert \varphi \rVert_{H_0^1},
\end{align*}

and therefore

$$\lVert \partial_tu_{m,\varepsilon}(t) \rVert_{H^{-1}} \leq  \; \frac{1}{2}\lVert u_{m,\varepsilon} \rVert_{H_0^1} + \lVert j_{\varepsilon}'(u_{m,\varepsilon}) \rVert_{H^{-1}} + C \lVert u_{m,\varepsilon} \rVert_{L^2}^{\frac{4-N}{2}}\lVert  u_{m,\varepsilon} \rVert_{H_0^1}^{\frac{N}{2}}+ \lVert f \rVert_{H^{-1}}.$$

Now, using the following convexity inequality

$$\forall k \in \mathbb{N},\; \forall (x_i)_{1 \leq i \leq k} \in (0,+\infty)^k,\; \exists C > 0,\; \left(\sum_{i=1}^kx_i\right)^{\frac{4}{N}} \leq C\sum_{i=1}^k x_i^{\frac{4}{N}}$$

we get, after integrating in time an using the the embedding $L^2(\Omega) \hookrightarrow L^{\frac{4}{N}}(\Omega)$ (which is valid since $N \in \{2,3\}$, so that we have $\frac{4}{N} \leq 2$):

\begin{align*} \lVert \partial_tu_{m,\varepsilon} \rVert_{L^{\frac{4}{N}}((0,T),H^{-1})}^{\frac{4}{N}}&\; \leq C\left(\lVert u_{m,\varepsilon} \rVert_{L^2((0,T),H_0^1)}^{\frac{4}{N}} + \lVert j_{\varepsilon}'(u_{m,\varepsilon}) \rVert_{L^{\frac{4}{N}}((0,T),H^{-1})}^{\frac{4}{N}}\right)\\
& + C \lVert u_{m,\varepsilon} \rVert_{L^{\infty}((0,T),L^2)}^{\frac{8-2N}{N}}\lVert  u_{m,\varepsilon} \rVert_{L^2((0,T),H_0^1)}^{\frac{4}{N}}+ C\lVert f \rVert_{L^2((0,T),H^{-1})}^{\frac{4}{N}}.
\end{align*}

Using the previously given convexity inequality and the first and second points of the proposition we obtain the desired result.
\end{enumerate}
\end{proof}

{\bf Conflict of interest declaration}

The authors of this article declare that they have no conflicts of interest whatsoever. 

\nocite{duvaut-lions}
\nocite{evans}
\nocite{demengel}
\nocite{glowinski}

\bibliographystyle{plain}
\bibliography{Bibliographie}

\end{document}